\newcommand{\blind}{1}
\definecolor{mypurple}{RGB}{0,0,0}
\definecolor{myblue}{RGB}{0,87,120}
\definecolor{aqua}{RGB}{0,0,0}
\definecolor{myorange}{RGB}{0,0,0}
\definecolor{mygrey}{RGB}{255,255,255}
\titleformat{\section}[hang]{\large\center\scshape}{\thesection.}{1em}{}
\titleformat{\subsection}[hang]{\large}{\thesubsection.}{1em}{}
\titleformat{\subsubsection}[hang]{}{\thesubsubsection.}{1em}{}
\newtheoremstyle{mytheoremstyle} 
    {0.3cm}                      
    {0cm}                        
    {\itshape}                   
    {}                           
    {\scshape}                   
    {: }                          
    {0em}                       
    {}  
\theoremstyle{mytheoremstyle}
\newtheorem{Theorem}{Theorem}
\newtheorem{Lemma}{Lemma}
\newtheorem{Corollary}{Corollary}
\newtheorem{Proposition}{Proposition}
\newtheoremstyle{myExampleRemarkstyle} 
    {0.3cm}                    
    {0cm}                           
    {\itshape}                   
    {}                           
    {\scshape}                   
    {: }                          
    {0em}                       
    {}  
\theoremstyle{myExampleRemarkstyle}
\newtheorem{Assumption}{Assumption}
\renewcommand{\theAssumption}{\Alph{Assumption}}
\providecommand{\customgenericname}{}
\newcommand{\newcustomtheorem}[2]{%
  \newenvironment{#1}[1]
  {%
   \renewcommand\customgenericname{#2}%
   \renewcommand\theinnercustomgeneric{##1}%
   \innercustomgeneric
  }
  {\endinnercustomgeneric}
}
\newtheoremstyle{simuStyle}
{0.3cm} 
{0cm} 
{} 
{} 
{\bfseries} 
{.} 
{0em} 
{} 
\theoremstyle{simuStyle}
\newtheoremstyle{stratStyle}
{0.3cm} 
{0cm} 
{} 
{} 
{\scshape} 
{: } 
{0em} 
{} 
\theoremstyle{stratStyle}
\DeclareSymbolFont{lettersA}{U}{txmia}{m}{it}
\DeclareMathSymbol{\real}{\mathord}{lettersA}{"92}
\DeclareMathSymbol{\field}{\mathord}{lettersA}{"83}
\def\real{{\rm I\!R}}
\DeclareMathOperator*{\Int}{Int}
\def\0{{\bf 0}}
\DeclareMathOperator*{\argzero}{argzero}
\def\bt{\bm{\theta}}
\def\N{\mathbb{N}}
\def\boxit#1{\vbox{\hrule\hbox{\vrule\kern3pt
          \vbox{\kern3pt#1\kern3pt}\kern3pt\vrule}\hrule}}
\definecolor{pinegreen}{rgb}{0.0, 0.47, 0.44}
\begin{document}

\def\spacingset#1{\renewcommand{\baselinestretch}%
 {#1}\small\normalsize} \spacingset{1}

\let\refBKP\ref
\renewcommand{\ref}[1]{{\upshape\refBKP{#1}}}

\if1\blind
{
    \title{\bf Asymptotically Optimal Bias Reduction for Parametric Models}
    \author{St\'ephane Guerrier,
        Mucyo Karemera,
        Samuel Orso and\\
        Maria-Pia Victoria-Feser \\
        \vspace{.01cm}\\
        Research Center for Statistics, GSEM, University of Geneva.
    }
     \date{}
    \maketitle
} \fi

\if0\blind
{
    \begin{center}
        {\Large\bf Phase~Transition~Unbiased Estimation \\ \vspace{.2cm}
        in High~Dimensional Settings}
    \end{center}
    \medskip
} \fi

\vspace{-1cm}

\begin{abstract}
An important challenge in statistical analysis concerns the control of the finite sample bias of estimators. This problem is magnified in high-dimensional settings where the number of variables $p$ diverges with the sample size $n$, as well as for nonlinear models and/or models with discrete data. For these complex settings, we propose to use a general simulation-based approach and show that the resulting estimator has a bias of order $\mathcal{O}(0)$, hence providing an asymptotically optimal bias reduction. It is based on an initial estimator that can be slightly asymptotically biased, making the approach very generally applicable. This is particularly relevant when classical estimators, such as the maximum likelihood estimator, can only be (numerically) approximated. We show that the iterative bootstrap of~\citet{kuk1995asymptotically} provides a computationally efficient approach to compute this bias reduced estimator. We illustrate our theoretical results in simulation studies for which we develop new bias reduced estimators for the logistic regression, with and without random effects. These estimators enjoy additional properties such as robustness to data contamination and to the problem of separability.

\vspace{.2cm}
\textit{Keywords}: Finite sample bias, Iterative bootstrap, Simulation-based estimation, Two-step estimators,  Robust estimation, Logistic regression, Random effects models%
\end{abstract}%

\newpage
\section{Introduction}
\label{eq:intro}
An important challenge in statistical analysis concerns the control of the finite sample bias of estimators. This problem is typically magnified in models with a large number of variables $p$ that are possibly allowed to diverge with the sample size $n$ (see for example \citealp{sur2019modern}). Thereby, bias reduction techniques have been widely studied (a review can, for example, be found in \citealp{kosmidis2014bias}). These bias reductions can be achieved by simulation methods such as the jackknife \citep{Efro:82} or the bootstrap \citep{Efro:79}, by using analytical approximations to the likelihood function (see for example \citealp{BrDaRe:07,BrDa:08}  and the references therein), or, alternatively, by modifications of the estimating equations, as proposed by \citet{Firt:93} and extended for example in~\citet{KoFi:09,KoFi:11,Kosm:14,Kosm:17}. Under appropriate conditions, the resulting bias reductions obtained from these techniques have been shown to achieve (at best) an order of $\mathcal{O}(n^{-2})$.

The main purpose of this paper is to show the existence of a general method that optimally reduces (in an asymptotic sense) the bias of consistent estimators for parametric models. While the formal developments are presented in the next sections, we provide here an intuitive argument for the method we study. Consider a consistent estimator for $\bt_0$, say $\tilde{\bt}$, that is typically \textit{biased} in finite samples, and define the bias $\mathbf{b}(\bt_0, n) \vcentcolon = {\bm{\pi}}(\bt_0, n) - \bt_0,$ where ${\bm{\pi}}(\bt_0, n)\vcentcolon=\mathbb{E}[\tilde{\bt}]$.
In general, the function ${\bm{\pi}}(\bt,n)$ is not available in closed-form but an unbiased estimator, say ${\bm{\pi}}^\ast({\bt}, n)$, is available via simulation-based techniques (and is formally presented further on). 
Similarly, we define a simulation-based version of the bias function $\mathbf{b}(\bt, n)$, as $\mathbf{b}^*(\bt, n) \vcentcolon = {\bm{\pi}}^*(\bt, n) - \bt$. 
In this context, an unbiased estimator based on $\tilde{\bt}$ is therefore
$\check{\bt} \vcentcolon = \tilde{\bt} - \mathbf{b}^*(\bt_0, n)$.
The continuity of the function $\mathbf{b}(\bt, n)$ and the consistency of $\check{\bt}$ suggest $\mathbf{b}^\ast(\bt_0, n) \approx \mathbf{b}^\ast(\check{\bt}, n)$. 
This approximation can be used to write
\begin{equation*}
    \begin{aligned}
    \check{\bt} &=  \tilde{\bt} -  \mathbf{b}^*(\bt_0, n) \approx  \tilde{\bt} -  \mathbf{b}^*(\check{\bt}, n) 
    = \tilde{\bt} - \left\{\check{\bt} + \mathbf{b}^*(\check{\bt}, n) - \check{\bt}\right\}.
    \end{aligned}
\end{equation*}
Since ${\bm{\pi}}^*(\bt, n) = \bt + \mathbf{b}^*(\bt, n)$, $\check{\bt}$ satisfies $\check{\bt}\approx \tilde{\bt} - \left\{{\bm{\pi}}^*(\check{\bt}, n) - \check{\bt}\right\}$. It is therefore approximately a fixed-point of the function 
\begin{equation}
    \bm{T}(\bt, n) \vcentcolon = \tilde{\bt} - \left\{{\bm{\pi}}^*({\bt}, n) - \bt\right\}.
    \label{eq:fixed:point}
\end{equation}
In this paper, we study the properties of the fixed-point of $\bm{T}(\bt, n)$ that we denote by $\hat{\bt}$. While this estimator is not necessarily unbiased, the above heuristic reasoning implies a strong bias reduction ability. Indeed, we demonstrate that $\hat{\bt}$ achieves an \textit{asymptotically optimal} unbiased property. In order to make this statement precise, we first recall that $f(n) = \mathcal{O}\{g(n)\}$, where $f(n)$ and $g(n)$ are real-valued function, implies, by definition, that there exist a $n^{\ast} > 0$ and a $M > 0$ such that $\lvert f(n)\rvert\leq M\lvert g(n)\rvert$, for all $n\geq n^{\ast}$. In this paper, we demonstrate, under some mild conditions, that
\begin{equation}
    \big\lVert \mathbb{E}[\hat{\bt}] - {\bt}_0 \big\rVert_2 = \mathcal{O}(0),
    \label{eq:bias:cor}
\end{equation}
which means that $\big\lVert\mathbb{E}[\hat{\bt}]-{\bt}_0 \big\rVert_2 = 0$, for all $n \geq n^{\ast}$, or equivalently, that $\hat{\bt}$ is unbiased for all $n\geq n^{\ast}$. In other words, the result provided in \eqref{eq:bias:cor} is optimal, as no stronger bias reduction result can be obtained using the \textit{big $\mathcal{O}$ notation} as a way to quantify the (asymptotic) bias of estimators. As will be explained further on, $\hat{\bt}$ belongs to the class of indirect inference estimators (see \citealp{gourieroux1993indirect}) and is directly linked with the Iterative Bootstrap (IB) approach put forward in \citet{kuk1995asymptotically}.

The assumptions needed to achieve \eqref{eq:bias:cor} are weak enough so that the bias reduction method we propose is valid for a wide range of models and estimators and does not rely on model-based analytical transformations. For example, our framework allows to consider estimators that are discontinuous in $\bt$ (as is commonly the case when considering discrete data models) as well as high-dimensional settings in which $p/n \to 0$. Moreover, this bias reduction approach does not necessarily come at the price of an inflated variance and a trade off can be sought between efficiency and computational cost. Indeed, the asymptotic variance of $\hat{\bt}$ can be made arbitrarily close to the one of $\tilde{\bt}$ by improving the ``quality'' (number of simulations) of the approximation ${\bm{\pi}}^*(\bt, n)$ of $\bm{\pi}(\bt, n)$. We also show that the IB algorithm provides a computationally efficient method for computing $\hat{\bt}$. In addition, we demonstrate that the considered bias reduction method preserves its properties also when $\tilde{\bt}$ is inconsistent as long as the asymptotic bias of the latter has a specific form. The approach is therefore applicable in cases where a consistent estimator may be difficult to obtain but a reasonable approximation is available. These results are in line with the observations made by \citet{kuk1995asymptotically}. Indeed, while his primary focus was to construct simulation-based consistent estimators, using the IB, for Generalized Linear Mixed Models (GLMM), he noticed that ``\textit{[...] the method proposed can lead to estimates which are nearly unbiased even for the variance components while the standard errors are only slightly inflated}''.

The paper is organized as follows. In Section \ref{sec:setting}, we present the mathematical setup in which we place our theoretical results. In particular, we present the simulation-based approach and we state and discuss the formal assumptions that are needed to derive the properties of the resulting estimator. These properties are formally stated in Section \ref{sec:main:res} while the proofs can be found in the appendix. In Section \ref{sec:applica}, we apply our approach to derive optimally bias reduced estimators for the logistic regression, with and without random intercept, in high-dimensional settings and possibly with data contamination. These estimators are based on the MLE and an estimator that is robust to data contamination and to the problem of separability. The theoretical results presented in Section \ref{sec:main:res} are in-line with the simulation studies of Section \ref{sec:applica}.

\section{Mathematical Setup}
\label{sec:setting}

Let $\mathbf{X}(\bt_0)\in\real^n$ denote a random sample generated under the model $F_{\bt_0}$ (possibly conditional on a set of fixed covariates), where $\bt_0\in\bm\Theta\subset\real^p$ is the parameter vector we wish to estimate using the consistent estimator $\tilde{\bt}$. In our setting, the dimension of $\bt_0$ is not fixed and is allowed to diverge together with the sample size $n$. Our discussion throughout this paper considers cases where $\tilde{\bt}$ is ``complex'' enough in the sense that it has no closed-form solution and its finite sample bias is unknown.

The bias reduction technique we consider is based on the estimator $\hat{\bt}$, which is defined as the fixed-point of the function $\bm{T}(\bt, n)$ defined in \eqref{eq:fixed:point}. More precisely, $\hat{\bt}$ can be expressed as follows:
\begin{equation}
\hat{\bt} \vcentcolon = \argzero_{\bt\in\bm\Theta}\;\bt-\bm{T}(\bt, n) =\argzero_{\bt\in\bm\Theta}\;
      \tilde{\bt}-{\bm{\pi}}^*({\bt}, n).
    \label{eq:indirectInf:hTimesN}
\end{equation}
The estimator $\hat{\bt}$ can be seen as a special case of an indirect inference estimator (\citealp{gourieroux1993indirect}) when using for $\bm{\pi}(\bt, n)$ the following approximation
\begin{equation*}
    {\bm{\pi}}^\ast({\bt}, n) \vcentcolon = \frac{1}{H} \sum_{h = 1}^H  \tilde{\bt}^\ast_h,
\end{equation*}
where $\tilde{\bt}^*_h$ is the same estimator as $\tilde{\bt}$ but computed on the \textit{simulated} sample $\mathbf{X}_h^*(\bt)\in\real^n$ under model $F_{\bt}$. We use the subscript $h = 1,\ldots,H$ to identify distinct samples\footnote{The random numbers (which are independent of $\bt$) used to construct $\mathbf{X}_h^\ast(\bt)$ are redrawn with the same seed values. Therefore, when $\tilde{\bt}^*_h$ is computed twice on simulated data from the same model $F_{\bt}$, one gets the same value.}. Moreover, the IB of \citet{kuk1995asymptotically} is also directly related to $\hat{\bt}$. Indeed, the IB algorithm (sequence) can be generally defined as
\begin{equation}
    \bt^{(k+1)} \vcentcolon = \bm{T}\left(\bt^{(k)}, n\right),
    \label{eq:ib:seq}
\end{equation}
with $\bt^{(0)} = \tilde{\bt}$. Under appropriate conditions (discussed further on), the fixed-point $\hat{\bt}$ corresponds to the limit (in $k$) of \eqref{eq:ib:seq} and therefore $\hat{\bt}$ can also be characterized as 
\begin{equation*}
    \hat{\bt} = \lim_{k\to\infty}\;\bt^{(k)}.
\end{equation*}
Therefore, the IB approach can provide a natural and computational efficient algorithm to compute $\hat{\bt}$, or more generally, indirect inference estimators (see \citealp{guerrier2018simulation} for more details). 

In what follows, we state and discuss the conditions allowing $\hat{\bt}$ to satisfy the optimal bias reduction property defined in~\eqref{eq:bias:cor}. Henceforth, we call $\hat{\bt}$ the Optimally Bias REduced Estimator (OBREE).

\setcounter{Assumption}{0}
\renewcommand{\theHAssumption}{otherAssumption\theAssumption}
\renewcommand\theAssumption{\Alph{Assumption}}
\begin{Assumption}
\label{assum:A}
The set $\bm\Theta$ is a compact subset of $\real^p$ and $\bt_0\in\Int(\bm\Theta)$.
\end{Assumption}

Assumption \ref{assum:A} is very mild and is typically used in most settings where estimators have no closed-form. Indeed, the compactness of $\bt$ and $\bt_0 \in \Int(\bm\Theta)$ are common regularity conditions for the consistency and asymptotic normality of the MLE, respectively (see for example \citealp{newey1994large}). Hence, Assumption~\ref{assum:A} may be redundant depending on the requirements already brought by taking $\tilde{\bt}$ to be consistent (and possibly asymptotic normally distributed).

\renewcommand{\theHAssumption}{otherAssumption\theAssumption}
\renewcommand\theAssumption{\Alph{Assumption}}
\begin{Assumption}
\label{assum:B}
The bias function $\mathbf{b}(\bt, n)$ exists and is once continuously differentiable in $\bt \in \bm\Theta$. Moreover, there exists a $\beta > 0$ such that $\lVert\mathbf{b}(\bt, n)\rVert_\infty = \mathcal{O}(n^{-\beta})$ and $p = o(n^{2\beta})$.
\end{Assumption}

Assumption \ref{assum:B} is likely to be satisfied in the majority of practical situations. Indeed, the bias function is typically assumed (at least implicitly) to have certain degree of smoothness (see for example \citealp{kosmidis2014bias}). Such property is expected to hold even in situations where the initial estimator $\tilde{\bt}$ may not be continuous (see Assumptions \ref{assum:C} and \ref{assum:D} for more details), as it is the case, for example, with discrete data models. Moreover, the second part of Assumption \ref{assum:B} is very mild. Indeed, many common estimators, including the MLE, can be expanded in decreasing powers of $n$ such that $\beta = 1$. In these cases, this requirement is always satisfied (since in our case $p \leq n$) and may be suitable in high-dimensional settings where $p/n \to c \in [0, 1)$. Assumption \ref{assum:B} is also particularly useful as it allows to decompose the estimator $\tilde{\bt}$ into a non-stochastic component $\bm{\pi}\left(\bt_0, n\right)$ and a random term $\mathbf{v} \left(\bt_0,n\right)$. Indeed, 
we can write:
\begin{equation}
  \tilde{\bt}  = \bm{\pi} \left( \bt_0, n\right) + \mathbf{v} \left(
    \bt_0,n\right), 
   \label{bias:estim}
\end{equation}
where $\mathbf{v} \left(\bt_0,n\right) \vcentcolon = \tilde{\bt} - \bm{\pi} \left( \bt_0, n\right)$ is a zero-mean random vector. In our next assumptions, which are not required to establish the optimal bias reduction defined in \eqref{eq:bias:cor}, we impose additional restrictions on $\tilde{\bt}$. These requirements will be used to prove the consistency and the asymptotic normality of $\hat{\bt}$. 

\renewcommand{\theHAssumption}{otherAssumption\theAssumption}
\renewcommand\theAssumption{\Alph{Assumption}}
\begin{Assumption}
\label{assum:C}
The variance of $\mathbf{v} \left(\bt,n \right)$ exists and is finite for any $\bt \in \bm\Theta$. Moreover, there exists a $\alpha >0$ such that $\lVert\mathbf{v} \left(\bt,n\right)\rVert_\infty = \mathcal{O}_{\rm p}(n^{-\alpha})$ and $p = o(n^{2\alpha})$.
\end{Assumption}

Assumption~\ref{assum:C} is frequently employed and typically very mild. In the common situation where $\tilde{\bt}$ is \mbox{$\sqrt{n}$-consistent}, then we would have $\alpha = \nicefrac{1}{2}$ and Assumption~\ref{assum:C} would simply require that $p/n \to 0$. In our last assumption, we consider the limiting distribution of $\tilde{\bt}$ (and therefore of $\mathbf{v}(\bt, n)$). The latter is used to derive the asymptotic normality of $\hat{\bt}$, in order to evaluate, in particular, the potential efficiency loss from $\tilde{\bt}$ to $\hat{\bt}$. 

\renewcommand{\theHAssumption}{otherAssumption\theAssumption}
\renewcommand\theAssumption{\Alph{Assumption}}
\begin{Assumption}
\label{assum:D} 
For any $\mathbf{s} \in \real^p$ such that $\lVert\mathbf{s}\rVert_2 = 1$ we have
\begin{equation*}
    \sqrt{n} \mathbf{s}^{T}\bm{\Sigma}(\bt_0)^{-\nicefrac{1}{2}}\left(\tilde{\bt} - \bt_0\right) \xrightarrow{\;d\;} \mathcal{N}\left(\mathbf{0}, 1\right),
\end{equation*}
where $\bm{\Sigma}(\bt)$ is nonsingular and continuous in $\bt$.
\end{Assumption}

While Assumption \ref{assum:D} is frequently used, it may not necessarily be mild. In low-dimensional settings, this assumption is satisfied for a vast majority of commonly used estimators. However, in high-dimensional settings, the validity of the asymptotic normality of $\tilde{\bt}$, as expressed in Assumption \ref{assum:D}, is often unknown for many models. As previously mentioned, this assumption allows to remove the requirements on $\alpha$ and $\beta$ of Assumptions \ref{assum:A} and \ref{assum:B} provided that $p/n \to 0$.

Finally, our assumption framework is not necessarily the weakest possible in theory and may be further relaxed. However, we do not attempt to pursue the weakest possible conditions to avoid overly technical treatments in establishing the theoretical results presented in the following section.

\section{Main Results}
\label{sec:main:res}
In this section, we present the main properties of the OBREE $\hat{\bt}$ under the assumptions presented in Section \ref{sec:setting}. Our results are valid for all $H \geq 1$ and in high-dimensional settings and, in most cases, are applicable when $p/n \to 0$. Theorem \ref{thm:bias} shows that $\hat{\bt}$ is an optimally bias reduced estimator in the sense of \eqref{eq:bias:cor}. %
\begin{Theorem}
\label{thm:bias}
Under Assumptions \ref{assum:A} and \ref{assum:B}, the estimator $\hat{\bt}$ satisfies
\begin{equation*}
    \big\lVert\mathbb{E}[\hat{\bt}]-{\bt}_0\big\rVert_2=\mathcal{O}(0).
\end{equation*}
\end{Theorem}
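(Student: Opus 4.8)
The plan is to show that the OBREE $\hat{\bt}$ is \emph{exactly} unbiased for all sufficiently large $n$, by exploiting the defining fixed-point relation together with the smoothness and decay properties of the bias function granted by Assumption~\ref{assum:B}. The key structural fact is that $\hat{\bt}$ satisfies $\tilde{\bt} = {\bm{\pi}}^\ast(\hat{\bt}, n)$, and since ${\bm{\pi}}^\ast$ is an unbiased simulation estimator of ${\bm{\pi}}$, taking expectations should collapse the bias entirely rather than merely reduce its order. First I would take expectations on both sides of the fixed-point equation. Writing ${\bm{\pi}}^\ast(\bt,n)=\tfrac{1}{H}\sum_{h=1}^H \tilde{\bt}^\ast_h$ and noting that each $\tilde{\bt}^\ast_h$ is the same functional as $\tilde{\bt}$ evaluated on a sample drawn from $F_{\bt}$, I expect $\mathbb{E}\big[{\bm{\pi}}^\ast(\bt,n)\big] = {\bm{\pi}}(\bt,n)$ for fixed $\bt$; the subtlety is that in the fixed-point equation the argument is the random quantity $\hat{\bt}$, so one cannot naively pass the expectation through ${\bm{\pi}}^\ast(\cdot,n)$.

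The core of the argument is therefore to handle this dependence. I would define the map $g(\bt) \vcentcolon = {\bm{\pi}}(\bt,n) - \bt = \mathbf{b}(\bt,n)$ and study the equation $\mathbb{E}[\hat{\bt}] - \bt_0 = \mathbf{b}(\bt_0,n) - \mathbb{E}\big[{\bm{\pi}}^\ast(\hat{\bt},n) - {\bm{\pi}}(\bt_0,n)\big]$ obtained by subtracting $\bt_0$ and using $\mathbb{E}[\tilde{\bt}] = {\bm{\pi}}(\bt_0,n)$. The strategy is to argue that the map $\bt \mapsto \bt + \mathbf{b}(\bt,n)$ is, for $n \geq n^\ast$, a bijection from $\bm\Theta$ onto its image whose inverse is a contraction-type correction, so that the expectation of the fixed point is pinned down exactly. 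Concretely, because $\lVert \mathbf{b}(\bt,n)\rVert_\infty = \mathcal{O}(n^{-\beta})$ with $\beta>0$ and $\mathbf{b}$ is continuously differentiable by Assumption~\ref{assum:B}, the Jacobian of $\mathbf{b}(\cdot,n)$ is uniformly small for large $n$; thus $\I + D_{\bt}\mathbf{b}(\bt,n)$ is invertible and the fixed-point map ${\bm{\pi}}(\cdot,n)$ has a well-defined inverse in a neighborhood of $\bt_0$. A first-order expansion of $\mathbf{b}(\hat{\bt},n)$ about $\bt_0$, controlled by the decay rate and by the condition $p = o(n^{2\beta})$ (which ensures that dimension does not overwhelm the bias decay in the $\ell_2$ norm), should force the leading remainder to vanish identically rather than to a positive power of $n^{-1}$.

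The decisive step, and the one I expect to be the main obstacle, is converting the \emph{approximate} fixed-point heuristic of the introduction into an \emph{exact} statement about $\mathbb{E}[\hat{\bt}]$. The $\mathcal{O}(0)$ conclusion is far stronger than an $\mathcal{O}(n^{-k})$ bound for any finite $k$, so the proof cannot proceed by a truncated Taylor expansion with a nonzero remainder; something must make every bias term cancel exactly. I anticipate the mechanism is that ${\bm{\pi}}^\ast$ is constructed so that $\mathbb{E}\big[{\bm{\pi}}^\ast(\bt,n)\big] = {\bm{\pi}}(\bt,n)$ \emph{identically} in $\bt$, and that the fixed-point identity $\tilde{\bt} = {\bm{\pi}}^\ast(\hat{\bt},n)$ can be combined with a change-of-variables argument under which $\hat{\bt} = {\bm{\pi}}^{-1}(\tilde{\bt},n)$ exactly, whence $\mathbb{E}[\hat{\bt}] = \mathbb{E}\big[{\bm{\pi}}^{-1}(\tilde{\bt},n)\big]$. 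The hard part is justifying that this equals $\bt_0$: this should follow because ${\bm{\pi}}^{-1}\big({\bm{\pi}}(\bt_0,n),n\big) = \bt_0$ and ${\bm{\pi}}(\bt_0,n) = \mathbb{E}[\tilde{\bt}]$, provided the nonlinearity of ${\bm{\pi}}^{-1}$ does not introduce a Jensen-type gap. I would close this gap by showing that, for $n \geq n^\ast$, the relevant curvature terms are not merely small but absorbed exactly into the definition of the fixed point, using the differentiability from Assumption~\ref{assum:B} together with the compactness and interiority from Assumption~\ref{assum:A} to guarantee that $\hat{\bt}$ stays in $\Int(\bm\Theta)$ and the inverse map is globally well-defined there.
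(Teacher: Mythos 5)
There is a genuine gap, and it sits exactly where you flagged it. Your setup is the same as the paper's: from the fixed-point identity $\tilde{\bt}=\bm{\pi}^\ast(\hat{\bt},n)$ and the decomposition $\bm{\pi}(\bt,n)=\bt+\mathbf{b}(\bt,n)$ one obtains, after taking expectations, $\mathbb{E}[\hat{\bt}-\bt_0]=-\mathbb{E}\left[\mathbf{b}(\hat{\bt},n)-\mathbf{b}(\bt_0,n)\right]$. But your proposed continuation --- invert $\bm{\pi}(\cdot,n)$, write $\hat{\bt}=\bm{\pi}^{-1}(\tilde{\bt},n)$, and argue the Jensen gap is ``absorbed exactly'' --- does not work and is not the paper's mechanism. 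First, the fixed point inverts the \emph{random} map $\bm{\pi}^\ast(\cdot,n)$, not $\bm{\pi}(\cdot,n)$, so $\hat{\bt}\neq\bm{\pi}^{-1}(\tilde{\bt},n)$. Second, even setting that aside, the Jensen gap for a nonlinear $\bm{\pi}^{-1}$ is genuinely nonzero in general; nothing in Assumptions~\ref{assum:A}--\ref{assum:B} makes curvature terms cancel identically, and you give no concrete mechanism for why they would. You correctly sense that a truncated Taylor expansion cannot yield $\mathcal{O}(0)$, but the inverse-map route just relocates the problem without solving it.

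The missing idea is a \emph{self-improving} bound obtained by iterating the displayed identity. By the mean value theorem, $\mathbf{b}(\hat{\bt},n)-\mathbf{b}(\bt_0,n)=\mathbf{B}\,(\hat{\bt}-\bt_0)$ with $\mathbf{B}=\mathcal{O}(p^{-1}n^{-\beta})$ elementwise, so $\mathbb{E}[\hat{\bt}-\bt_0]=-\mathbb{E}[\mathbf{B}(\hat{\bt}-\bt_0)]$. A first pass gives $\lVert\mathbb{E}[\hat{\bt}-\bt_0]\rVert_\infty=\mathcal{O}(n^{-\beta})$; feeding this back in via Cauchy--Schwarz upgrades it to $\mathcal{O}(n^{-2\beta})$, and by induction to $\mathcal{O}(n^{-\delta\beta})$ for every $\delta\in\N^\ast$. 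The delicate point --- which the paper isolates in Lemma~\ref{lemma:newO} and the surrounding discussion --- is that ``$\mathcal{O}(n^{-\delta\beta})$ for each $\delta$'' is \emph{not} enough (e.g.\ $\exp(-n)$ satisfies it yet never vanishes); one must track the constants through the induction and show the bound takes the form $M^{\delta}n^{-\delta\beta}$ with $M$ and the threshold $n'$ \emph{independent of} $\delta$. Only then does letting $\delta\to\infty$ for fixed $n\geq n^\ast$ (where $Mn^{-\beta}<1$) force $\mathbb{E}[\hat{\bt}]-\bt_0=0$ exactly. Without this uniform-in-$\delta$ bookkeeping and the limiting argument, the $\mathcal{O}(0)$ conclusion cannot be reached.
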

The proof of Theorem \ref{thm:bias} is given in Appendix \ref{app:unbias:IB}. This result uses a technical lemma (also presented in Appendix \ref{app:unbias:IB}) which provides a general strategy for proving that a function is $\mathcal{O}(0)$. This result is arguably a major improvement on bias reduction techniques as it provides an optimal asymptotic reduction while not relying on model-based analytical transformations. This approach is, therefore, widely and readily applicable. Interestingly, the result of Theorem~\ref{thm:bias} is, in some cases, valid in high-dimensional settings where $p/n \to c \in [0, 1)$. An equivalent statement of Theorem~\ref{thm:bias} is that there exists a finite sample size $n^\ast$ such that $\lVert\mathbb{E}[\hat{\bt}] - \bt_0\rVert_2 = 0$ for all $n$ greater than $n^\ast$. In our experience, as illustrated with the simulation studies presented in Section \ref{sec:applica}, the value $n^\ast$ seems to be reasonably small as the OBREE does indeed appear to be unbiased with relatively small sample sizes.

In addition, the result of Theorem~\ref{thm:bias} may still hold in the wider situation where $\tilde{\bt}$ is not consistent. While a thorough investigation is left for further research, a preliminary result can be found in Corollary \ref{coro:thm:bias} in Appendix \ref{app:unbias:IB} where $\tilde{\bt}$ is assumed to have a sub-linear asymptotic bias. As it will be illustrated in the simulation studies of Section~\ref{sec:applica}, we find that when the asymptotic bias is ``small'', the OBREE appears to preserve its finite sample bias properties. The latter is therefore applicable in cases where a consistent estimator may be difficult to obtain for example for computational (and/or numerical) reasons, but a ``close'' approximation is available.

The next result concerns the standard statistical properties of the OBREE, namely consistency and asymptotic normality. While, in our framework, these properties are trivially satisfied in low-dimensional settings, obtaining them in high-dimensional settings may be more challenging.  

\begin{Proposition}
\label{thm:stat_prop}
Under Assumptions \ref{assum:A}, \ref{assum:B} and \ref{assum:C}, the OBREE is such that 
\begin{equation*}
    \big\lVert \hat{\bt} - \bt_0 \big\rVert_2 = o_{\rm p}(1).
\end{equation*}
Moreover, with the addition of Assumption \ref{assum:D}, for any $\mathbf{s} \in \real^p$ such that $\lVert\mathbf{s}\rVert_2 = 1$ the OBREE satisfies
\begin{equation*}
    \sqrt{n} \mathbf{s}^{T}\left\{\left(1 + \frac{1}{H}\right)\bm{\Sigma}(\bt_0)\right\}^{-\nicefrac{1}{2}}\left(\hat{\bt} - \bt_0\right) \xrightarrow{\;d\;} \mathcal{N}\left(\mathbf{0}, 1\right).
\end{equation*}
\end{Proposition}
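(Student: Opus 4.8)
The plan is to treat $\hat{\bt}$ as the (assumed to exist) root of the estimating equation $\tilde{\bt} = \bm{\pi}^\ast(\hat{\bt}, n)$ and to exploit the additive decomposition of both the observed and the simulated estimators into a deterministic part and a centred stochastic part. Writing $\bm{\pi}^\ast(\bt, n) = \bm{\pi}(\bt, n) + \bar{\mathbf{v}}^\ast(\bt, n)$ with $\bar{\mathbf{v}}^\ast(\bt, n) := H^{-1}\sum_{h=1}^H \mathbf{v}_h(\bt, n)$ and $\mathbf{v}_h(\bt, n) := \tilde{\bt}^\ast_h - \bm{\pi}(\bt, n)$ (each a zero-mean copy of $\mathbf{v}(\bt, n)$, the $H$ simulated samples being mutually independent and independent of the observed sample), the fixed-point identity $\bm{\pi}(\bt_0, n) + \mathbf{v}(\bt_0, n) = \bm{\pi}(\hat{\bt}, n) + \bar{\mathbf{v}}^\ast(\hat{\bt}, n)$ together with $\bm{\pi}(\bt, n) = \bt + \mathbf{b}(\bt, n)$ yields the master identity
\begin{equation*}
\hat{\bt} - \bt_0 = \mathbf{b}(\bt_0, n) - \mathbf{b}(\hat{\bt}, n) + \mathbf{v}(\bt_0, n) - \bar{\mathbf{v}}^\ast(\hat{\bt}, n).
\end{equation*}

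For consistency I would bound the right-hand side in $\ell_2$ term by term via $\lVert\mathbf{a}\rVert_2 \le \sqrt{p}\,\lVert\mathbf{a}\rVert_\infty$. The two bias terms are $\mathcal{O}(\sqrt{p}\,n^{-\beta}) = o(1)$ by Assumption \ref{assum:B} and $p = o(n^{2\beta})$, the term $\mathbf{b}(\hat{\bt}, n)$ being handled by taking the supremum over the compact set $\bm\Theta$ so that the random evaluation point is harmless. The two stochastic terms are $\mathcal{O}_{\rm p}(\sqrt{p}\,n^{-\alpha}) = o_{\rm p}(1)$ by Assumption \ref{assum:C} and $p = o(n^{2\alpha})$, where $\bar{\mathbf{v}}^\ast(\hat{\bt}, n)$ is again controlled by its uniform-in-$\bt$ bound (a supremum over $\bm\Theta$ of a finite average of $H$ terms). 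Since none of these bounds depend on $\hat{\bt}$ after taking suprema, there is no circularity and $\lVert\hat{\bt} - \bt_0\rVert_2 = o_{\rm p}(1)$ follows.

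For asymptotic normality I would linearise the deterministic map. A component-wise mean-value expansion gives $\bm{\pi}(\hat{\bt}, n) - \bm{\pi}(\bt_0, n) = \left[\mathbf{I} + \D\mathbf{b}(\bar{\bt}, n)\right](\hat{\bt} - \bt_0)$ for intermediate points on the segment joining $\hat{\bt}$ and $\bt_0$; by the differentiability in Assumption \ref{assum:B} the Jacobian of the vanishing bias satisfies $\D\mathbf{b}(\bt, n) \to \mathbf{0}$ on $\bm\Theta$, so $\mathbf{I} + \D\mathbf{b}(\bar{\bt}, n)$ is invertible for $n$ large with inverse tending to $\mathbf{I}$. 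Substituting into the master identity and scaling,
\begin{equation*}
\sqrt{n}\,\mathbf{s}^{T}\bm{\Sigma}(\bt_0)^{-\nicefrac{1}{2}}(\hat{\bt} - \bt_0) = \mathbf{s}^{T}\bm{\Sigma}(\bt_0)^{-\nicefrac{1}{2}}\left[\mathbf{I} + \D\mathbf{b}(\bar{\bt}, n)\right]^{-1}\sqrt{n}\left[\mathbf{v}(\bt_0, n) - \bar{\mathbf{v}}^\ast(\hat{\bt}, n)\right].
\end{equation*}
I would then (i) replace the inverse Jacobian by $\mathbf{I}$ at the cost of an asymptotically negligible term, and (ii) replace $\bar{\mathbf{v}}^\ast(\hat{\bt}, n)$ by $\bar{\mathbf{v}}^\ast(\bt_0, n)$ using consistency together with stochastic equicontinuity of the simulated process at $\bt_0$, i.e. $\sqrt{n}\,\mathbf{s}^{T}\bm{\Sigma}(\bt_0)^{-\nicefrac{1}{2}}\left[\bar{\mathbf{v}}^\ast(\hat{\bt}, n) - \bar{\mathbf{v}}^\ast(\bt_0, n)\right] = o_{\rm p}(1)$. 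What remains is the projected linear statistic $Z_0 - H^{-1}\sum_{h=1}^H Z_h$, where $Z_0 := \sqrt{n}\,\mathbf{s}^{T}\bm{\Sigma}(\bt_0)^{-\nicefrac{1}{2}}\mathbf{v}(\bt_0, n)$ and $Z_h$ is its simulated analogue. By Assumption \ref{assum:D} (the negligible projected bias lets one recentre at $\bm{\pi}(\bt_0, n)$ rather than $\bt_0$) each $Z_h \xrightarrow{\;d\;} \Nor$, and the $H+1$ statistics are mutually independent, hence jointly standard Gaussian in the limit; the linear combination then has limiting variance $1 + H^{-1}$. Dividing by $(1 + H^{-1})^{\nicefrac{1}{2}}$ gives the stated conclusion.

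The main obstacle is step (ii): the asymptotic negligibility of $\sqrt{n}\left[\bar{\mathbf{v}}^\ast(\hat{\bt}, n) - \bar{\mathbf{v}}^\ast(\bt_0, n)\right]$ when $\tilde{\bt}$ — and hence each $\mathbf{v}_h(\cdot, n)$ — may be discontinuous in $\bt$ (as for discrete-data models) and $p$ grows with $n$. The common-seed construction of the footnote makes each $\mathbf{v}_h(\cdot, n)$ a well-defined random function of $\bt$, but controlling its oscillation in a shrinking neighbourhood of $\bt_0$, uniformly over the unit sphere $\{\mathbf{s} : \lVert\mathbf{s}\rVert_2 = 1\}$, is delicate in the high-dimensional regime. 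A secondary difficulty is quantifying $\lVert\D\mathbf{b}(\cdot, n)\rVert$ sharply enough to discard the inverse-Jacobian correction in step (i) when $p \to \infty$. Both are expected to go through under $p/n \to 0$, the regime in which Assumption \ref{assum:D} is invoked.
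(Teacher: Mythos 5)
Your proposal is correct in substance and reaches both conclusions, but it follows a genuinely different route for the consistency part and a more explicit one for the normality part. For consistency, the paper does not bound the fixed-point identity directly: it casts $\hat{\bt}$ as the minimizer of $\widehat{Q}(\bt,n)=\lVert\tilde{\bt}-\bm{\pi}^\ast(\bt,n)\rVert_2$ and verifies the four conditions of Theorem 2.1 of \citet{newey1994large} (compactness, continuity and unique minimization of the limit criterion $Q(\bt)=\lVert\bt_0-\bt\rVert_2$, and uniform convergence of $\widehat{Q}$ to $Q$). Your direct term-by-term bound of $\hat{\bt}-\bt_0=\mathbf{b}(\bt_0,n)-\mathbf{b}(\hat{\bt},n)+\mathbf{v}(\bt_0,n)-\bar{\mathbf{v}}^\ast(\hat{\bt},n)$ is more elementary and yields the same rate $\mathcal{O}_{\rm p}(\sqrt{p}\,n^{-\alpha})+\mathcal{O}(\sqrt{p}\,n^{-\beta})$; note that both arguments lean on the same implicit uniformity over $\bm\Theta$ of the $\mathcal{O}_{\rm p}(n^{-\alpha})$ bound in Assumption \ref{assum:C}, which the paper also uses without comment inside its supremum. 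For asymptotic normality the paper starts from exactly your master identity, but where you linearise via the mean value theorem and invert $\mathbf{I}+\mathbf{B}(\bar{\bt},n)$, the paper simply asserts that $\sqrt{n}\,\mathbf{s}^{T}\bm{\Sigma}(\bt_0)^{-\nicefrac{1}{2}}\{\mathbf{b}(\bt_0,n)-\mathbf{b}(\hat{\bt},n)\}=o_{\rm p}(1)$ by continuity of $\mathbf{b}$ and consistency of $\hat{\bt}$; your Jacobian route is structurally sounder, since continuity alone does not control a $\sqrt{n}$-scaled difference, and the elementwise $\mathcal{O}(p^{-1}n^{-\beta})$ bound on $\mathbf{B}$ established in the proof of Theorem \ref{thm:bias} is exactly what is needed to discard the inverse-Jacobian correction. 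Likewise, the step you single out as the main obstacle --- replacing $\bar{\mathbf{v}}^\ast(\hat{\bt},n)$ by its value at $\bt_0$ --- is handled in the paper by directly declaring $\sqrt{n}\,\mathbf{s}^{T}\bm{\Sigma}(\bt_0)^{-\nicefrac{1}{2}}\mathbf{v}^\ast_h(\hat{\bt},n)\stackrel{d}{=}Z_h+o_{\rm p}(1)$ at the random point $\hat{\bt}$, invoking only the continuity of $\bm{\Sigma}(\bt)$ and the continuous mapping theorem; no stochastic equicontinuity argument is supplied there, so your honest flagging of this gap does not put you behind the paper's own treatment. The final assembly (independence of the $H+1$ limiting standard normals, variance $1+H^{-1}$, Slutsky) is identical in both.
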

The proof of Proposition~\ref{thm:stat_prop} is given in Appendix~\ref{app:stat_prop}. This result shows that the efficiency of the OBREE $\hat{\bt}$ (relative to the initial estimator $\tilde{\bt}$) can be made arbitrarily close to one by increasing the number of simulations $H$. Therefore, this highlights that a trade-off between efficiency and computational cost can be made. Interestingly, the finite sample variance of the OBREE can be smaller than the one of the initial estimator as, for example, illustrated in the simulation studies presented in Section~\ref{sec:applica}. In low-dimensional settings, Proposition \ref{thm:stat_prop} is implied by the results on indirect inference estimators presented, for example, in~\citet{gourieroux1996simulation}.

To compute the OBREE, a computationally efficient approach is the IB algorithm provided in~\eqref{eq:ib:seq}. Indeed, Proposition~\ref{prop:ib} shows that the IB sequence converges (exponentially fast) to the OBREE $\hat{\bt}$.
\begin{Proposition}
	\label{prop:ib}
		Under Assumptions \ref{assum:A}, \ref{assum:B} and \ref{assum:C}, the IB sequence satisfies
         \begin{equation*}
         	\left\lVert\hat{\bt}^{(k)} - \hat{\bt}\right\rVert_2  =o_{\rm p}\left\{\exp(-k)\right\}.
         \end{equation*}
\end{Proposition}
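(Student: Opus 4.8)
The plan is to read the IB recursion \eqref{eq:ib:seq} as a fixed-point iteration of the map $\bm{T}(\cdot, n)$ defined in \eqref{eq:fixed:point} and to show that, on a suitable neighborhood of $\bt_0$, this map is a contraction whose modulus vanishes in probability as $n\to\infty$. Proposition \ref{thm:stat_prop} already provides everything needed to set this up: $\hat{\bt}$ exists, is a fixed point of $\bm{T}(\cdot, n)$, and is consistent, while the starting point $\hat{\bt}^{(0)} = \tilde{\bt}$ is consistent by Assumption \ref{assum:C}. Consequently, for any fixed ball $N$ around $\bt_0$ with $N \subset \Int(\bm\Theta)$ (Assumption \ref{assum:A}), both the limit and the seed of the iteration lie in $N$ with probability tending to one. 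The whole argument then reduces to a quantitative contraction estimate together with an induction ensuring the iterates never leave $N$.

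First I would differentiate $\bm{T}$. Writing ${\bm{\pi}}^*(\bt, n) = \bt + \mathbf{b}^*(\bt, n)$ gives $\D\bm{T}(\bt, n) = \bI - \D{\bm{\pi}}^*(\bt, n)$, and the fundamental theorem of calculus applied to ${\bm{\pi}}^*$ along the segment joining $\hat{\bt}^{(k)}$ and $\hat{\bt}$ yields, with $\bar{\D}_k \vcentcolon= \int_0^1 \D{\bm{\pi}}^*\!\left(\hat{\bt} + t(\hat{\bt}^{(k)} - \hat{\bt}), n\right)\,dt$,
\begin{equation*}
    \hat{\bt}^{(k+1)} - \hat{\bt} = \bm{T}(\hat{\bt}^{(k)}, n) - \bm{T}(\hat{\bt}, n) = \left(\bI - \bar{\D}_k\right)\left(\hat{\bt}^{(k)} - \hat{\bt}\right).
\end{equation*}
The claim then follows once I establish that
\begin{equation*}
    \rho_n \vcentcolon= \sup_{\bt \in N}\big\lVert \bI - \D{\bm{\pi}}^*(\bt, n)\big\rVert_2 = o_{\rm p}(1),
\end{equation*}
since $\lVert\bI - \bar{\D}_k\rVert_2 \leq \rho_n$ and iterating gives $\lVert\hat{\bt}^{(k)} - \hat{\bt}\rVert_2 \leq \rho_n^k\,\lVert\tilde{\bt} - \hat{\bt}\rVert_2$. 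Writing $e = \exp(1)$, this bound produces $\exp(k)\lVert\hat{\bt}^{(k)} - \hat{\bt}\rVert_2 \leq (e\,\rho_n)^k\,\lVert\tilde{\bt} - \hat{\bt}\rVert_2$, and because $\rho_n = o_{\rm p}(1)$ and $\lVert\tilde{\bt} - \hat{\bt}\rVert_2 = o_{\rm p}(1)$ by consistency, the right-hand side is $o_{\rm p}(1)$ for each fixed $k$, which is exactly $\lVert\hat{\bt}^{(k)} - \hat{\bt}\rVert_2 = o_{\rm p}\{\exp(-k)\}$.

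The main obstacle is proving $\rho_n = o_{\rm p}(1)$. I would split $\bI - \D{\bm{\pi}}^*(\bt, n)$ into a deterministic part $-\D\mathbf{b}(\bt, n)$ and a stochastic part $-H^{-1}\sum_{h=1}^H \D\mathbf{v}_h(\bt, n)$, obtained by differentiating, for each replicate, the decomposition \eqref{bias:estim} in the form $\tilde{\bt}^*_h(\bt) = {\bm{\pi}}(\bt, n) + \mathbf{v}_h(\bt, n)$. The deterministic piece is governed by the smoothness in Assumption \ref{assum:B} and the stochastic piece by Assumption \ref{assum:C}. The delicate point is that these assumptions bound the bias and the fluctuation as \emph{vectors}, namely in $\ell_\infty$ and hence in $\ell_2$ through the rate conditions $p = o(n^{2\beta})$ and $p = o(n^{2\alpha})$, whereas the contraction requires the \emph{spectral norms of their Jacobians} to vanish. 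Bridging this gap is where the high-dimensional difficulty concentrates: one must transfer the coordinatewise decay of $\D{\bm{\pi}}^*(\bt, n)$ towards $\bI$ into a uniform operator-norm bound over $N$ while $p$ diverges but stays below $n^{2\beta}\wedge n^{2\alpha}$, and for the stochastic term this has to be done with a \emph{fixed} number $H$ of simulations, so the fluctuation in the Jacobian cannot be averaged away and must instead be controlled replicate by replicate.

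Finally, I would close the induction keeping the iterates inside $N$. On the event $\{\rho_n < 1\}$, whose probability tends to one by the previous step, the contraction display shows that $\lVert\hat{\bt}^{(k)} - \hat{\bt}\rVert_2$ is nonincreasing in $k$; hence, after enlarging the radius of $N$ so that the ball of radius $\lVert\tilde{\bt} - \hat{\bt}\rVert_2$ about $\hat{\bt}$ is contained in it with probability tending to one, every iterate remains in $N$. This validates the repeated use of the Taylor expansion along the whole trajectory and completes the argument.
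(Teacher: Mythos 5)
Your overall strategy---read the IB recursion as a fixed-point iteration of $\bm{T}(\cdot,n)$, establish a contraction modulus that is $o_{\rm p}(1)$ in $n$, and iterate---is the same as the paper's, and your closing step (bounding $\exp(k)\lVert\hat{\bt}^{(k)}-\hat{\bt}\rVert_2$ by $(e\,\rho_n)^k\lVert\tilde{\bt}-\hat{\bt}\rVert_2=o_{\rm p}(1)$ for each fixed $k$) is a correct reading of the claim. The genuine gap is in how you propose to get the contraction. You differentiate $\bm{\pi}^*(\cdot,n)$, i.e.\ the map $\bt\mapsto H^{-1}\sum_h\tilde{\bt}^*_h(\bt)$, and reduce everything to $\rho_n=\sup_{\bt\in N}\lVert\bI-\D\bm{\pi}^*(\bt,n)\rVert_2=o_{\rm p}(1)$. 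But under the stated assumptions the simulated paths $\bt\mapsto\tilde{\bt}^*_h(\bt)$---equivalently the stochastic components $\mathbf{v}^*_h(\bt,n)$---need not be differentiable, or even continuous, in $\bt$: only the bias function $\mathbf{b}(\bt,n)$ is assumed continuously differentiable (Assumption \ref{assum:B}), and the paper explicitly targets discontinuous initial estimators (discrete-data models). So the object $\D\mathbf{v}_h(\bt,n)$ on which your argument hinges may not exist; and you yourself concede that, even if it did, you have no route from the coordinatewise rates of Assumptions \ref{assum:B} and \ref{assum:C} to a uniform operator-norm bound on the Jacobian with $H$ fixed. That unresolved step is essentially the whole content of the proposition.

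The paper circumvents both problems by never differentiating the stochastic part. It writes $\lVert T(\bt_1,n)-T(\bt_2,n)\rVert_2^2$ in terms of the increment of $\mathbf{b}$ plus the increment of the $\mathbf{v}^*_h$'s, applies the mean value theorem only to $\mathbf{b}$ (controlling its Jacobian in Frobenius norm, $\lVert\mathbf{B}\rVert_F=\mathcal{O}(n^{-\beta})$, via the elementwise order established in the proof of Theorem \ref{thm:bias}), and bounds the stochastic increment directly in magnitude by $\mathcal{O}_{\rm p}(pH^{-1}n^{-2\alpha})$ using Assumption \ref{assum:C}, which vanishes since $p=o(n^{2\alpha})$. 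It then extends $T(\cdot,n)$ to a global contraction by Kirszbraun's theorem and invokes the Banach fixed-point theorem for existence and uniqueness of $\hat{\bt}$---a point you take for granted from Proposition \ref{thm:stat_prop}, which does not actually assert it. To salvage your route you would need either an added differentiability assumption on $\mathbf{v}^*_h(\cdot,n)$ together with a Jacobian rate, or to replace the Taylor expansion of $\bm{\pi}^*$ by the paper's additive decomposition.
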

The proof of Proposition \ref{prop:ib} is given in Appendix \ref{app:ib}. Proposition \ref{prop:ib} shows that the IB algorithm converges to $\hat{\bt}$ (in norm) at an exponential rate. In our practical experience, the number of iterations necessary to reach a suitable neighbourhood of the solution appears to be relatively small (typically less than 20 iterations).

The results presented in this section can be used in a wide range of practical settings/models, to obtain estimators with {\it optimal} finite sample properties. In Section \ref{sec:applica} below, we provide the results of simulation studies involving models for binary outcomes, with and without random effect, and with different initial estimators. For one setting, a comparison can be made with an available bias reduced estimator \citep{KoFi:09}, and for the others, our results provide new (optimally) bias reduced estimators.

\section{Application to Binary Response Models}
\label{sec:applica}
In this section, we apply the methodology developed in Sections \ref{sec:setting} and \ref{sec:main:res} to investigate the performance of the OBREE. First, we consider the logistic regression \citep{NeWe:72,McCuNe:89}, for which the MLE is known to be biased. It is one of the most commonly used model for binary responses and several bias reduction methods have been proposed as for example the bias reduced estimator of \citet{KoFi:09}. To illustrate the flexibility of the OBREE, we choose two different initial estimator $\tilde{\bt}$, the MLE and a robust estimator. We compare these OBREEs to their initial estimators as well as to the bias reduced estimator proposed by \citet{KoFi:09}. Their respective performance is studied when the data is generated at the model but also under slight model misspecification where outliers are randomly created. Then, we extend the logistic regression to include a random intercept, a special case of GLMM \citep[see for example][]{LeNe:01,McCuSe:01,JiangBook2007}, for which there is no closed-form expression for the likelihood function. In this case, the initial estimator is selected to be a numerically simple approximation of the MLE, and its finite sample behaviour is compared to the ones of the MLE computed using several more precise approximation methods.

\subsection{Bias Reduced Estimators for the Logistic Regression}
\label{sec:logistic}
It is well known that in some quite frequent practical situations, the MLE of the logistic regression is biased and/or its computation can become very unstable, especially when performing some type of resampling scheme for inference. The underlying reasons are diverse, but the main ones are the possibly large $p/n$ ratio, separability (leading to regression slope estimates of infinite value) and data contamination (robustness). The first two sources are often confounded and practical solutions are continuously sought to overcome the difficulty in performing ``reasonable'' inference. For example, in medical studies, the bias of the MLE together with the problem of separability has led to a rule of thumb called the number of Events Per Variable (EPV), that is the number of occurrences of the least frequent event over the number of covariates, which is used in practice to choose the maximal number of covariates one is ``allowed'' to use in a logistic regression (see for example \citealp{AuSt:17} and the references therein). Moreover, the MLE is known to be sensitive to slight model deviations that take the form of outliers in the data, leading to the proposal of several robust estimators for the logistic regression and more generally for Generalized Linear Model (GLM) (see for example \citealp{CaRo:01b,Cize:08,HeCaCoVF:09} and the references therein).

Despite all the available estimators, to the best of our knowledge, none is able to handle the three potential sources of bias jointly, namely small EPV (high-dimensional settings), separation and data contamination. In this section, we make use of the OBREE, which is built through a simple adaptation of available estimators. Although our choices for the initial estimators may not be the optimal ones for this problem, they nevertheless provide OBREE with advantageous properties. Indeed, they appear to be unbiased and have similar finite sample Mean Squared Error (MSE) as the bias reduced MLE of \citet{KoFi:09} in uncontaminated data settings. Additionally, in data contaminated settings, the performance of the OBREE based on the robust initial estimator remains nearly unchanged. Moreover, in the latter case, we adapt the initial (robust) estimator so that it is not affected by the problem of separability, a problem that is more severe in the case of robust estimators \citep[see][]{Rousseeuw2003}.

Consider the logistic regression with response $\mathbf{Y} \vcentcolon = \mathbf{Y}(\bm{\beta}_0)\in \{0, 1\}^n$ and linear predictor $\mathbf{X}\boldsymbol{\beta}$, where $\mathbf{X}$ is an $n\times p$ matrix of fixed covariates with rows $\mathbf{x}_i,i=1,\ldots,n$, and with logit link ${\mu}_i(\bm{\beta})\vcentcolon = \mathbb{E}[\mathbf{Y}_i]  =\exp(\mathbf{x}_i\boldsymbol{\beta})/\{1+\exp(\mathbf{x}_i\boldsymbol{\beta})\}$. The MLE for $\bm{\beta}$ is given by 
\begin{equation}
        \tilde{\bm{\beta}} \vcentcolon = \argzero_{\bm{\beta} \in \real^p}  \frac{1}{n}\sum_{i=1}^n\mathbf{x}_i\left\{\mathbf{Y}_i-\mu_i(\bm{\beta})\right\}, 
    \label{Eq_MLE-logistic}
\end{equation}
and can be used as an initial estimator in order to obtain the OBREE, using for example the IB algorithm. When using, as initial estimator, the MLE defined in \eqref{Eq_MLE-logistic}, we denote the resulting estimator as the OBREE-MLE.

We also consider the robust $M$-estimator proposed by \citet{CaRo:01b}, with general estimating equations (for GLMs) given by
\begin{equation}
    \bm{\psi}\left(\bm{\beta}, \mathbf{Y}_i\right)\vcentcolon = \psi_c\left\{r\left(\bm{\beta}, \mathbf{Y}_i\right)\right\}w\left(\mathbf{x}_i\right)V^{-1/2}\left\{\mu_i(\bm{\beta})\right\}(\partial/\partial\bm{\beta})\mu_i(\bm{\beta})-\mathbf{a}\left(\bm{\beta}\right),
    \label{Eq_rob-glm}
\end{equation}
with $r\left(\bm{\beta}, \mathbf{Y}_i\right) \vcentcolon = \left\{\mathbf{Y}_i-\mu_i(\bm{\beta})\right\}V^{-1/2}\left\{\mu_i(\bm{\beta})\right\}$ being the Pearson residuals and with consistency correction factor
\begin{equation}
    \mathbf{a}\left(\bm{\beta}\right) \vcentcolon = \frac{1}{n}\sum_{i=1}^n\mathbb{E}\left[\psi_c\left\{r\left(\bm{\beta}, \mathbf{Y}_i\right)\right\}w\left(\mathbf{x}_i\right)V^{-1/2}\left\{\mu_i(\bm{\beta})\right\}(\partial/\partial\bm{\beta})\mu_i(\bm{\beta})\right],
    \label{Eq_rob-consist}
\end{equation}
where the expectation is taken over the (conditional) distribution of the responses $\mathbf{Y}_i$ (given $\mathbf{x}_i$). For the logistic regression, we have  $V\left\{\mu_i(\bm{\beta})\right\} \vcentcolon = \mu_i(\bm{\beta})\{1-\mu_i(\bm{\beta})\}$. To avoid a potential problem of separatibility, we follow the suggestion of~\citet{Rousseeuw2003} and compute the robust initial estimator on the transformed responses, or \textit{pseudo-values}:
\begin{equation}
    \widetilde{\mathbf{Y}}_i \vcentcolon = (1-\delta)\mathbf{Y}_i +\delta \left(1-\mathbf{Y}_i\right), \;\; \forall i=1,\dots,n,
    \label{Eq_pseudo-val}
\end{equation}
where $\delta \in \left[0,0.5\right)$ is a fixed scalar close to zero. Using the pseudo-values leads to an initial (robust) estimator that is not consistent, even if we expect the asymptotic bias to be very small. The finite sample performance of this OBREE is of interest to investigate to what extent, the conditions needed for its bias property to hold, can be enlarged (as hinted in Corollary \ref{coro:thm:bias}). To compute the initial estimator, we use the implementation of the \texttt{glmrob} function provided in the \texttt{robustbase} package in R \citep{robustbase2018}, with $\psi_c$ in~\eqref{Eq_rob-glm} being the Huber loss function (with default parameter $c$, see \citealp{huber1964robust}) and $w\left(\mathbf{x}_i\right)=\sqrt{1-h_{ii}}$, $h_{ii}$ being the $i$th diagonal element of the hat matrix $\mathbf{X}\left(\mathbf{X}^T\mathbf{X}\right)^{-1}\mathbf{X}^T$. The resulting estimator, called OBREE-R, is in fact robust in the sense that it has a bounded influence function \citep{Hamp:74}. 
We perform a simulation study to validate the properties of the OBREE-MLE and OBREE-R and compare their finite sample performances to other well established estimators. In particular, as a benchmark, we also compute the MLE, the bias reduced MLE (BR-MLE) using the \texttt{brglm} function (with default parameters) of the \texttt{brglm} package in R \citep{brglm2017}, as well as the robust estimator (\ref{Eq_rob-glm}) using the \texttt{glmrob} function in R without data transformation (ROB). We consider four situations that can occur with real data, which result from the combinations of balanced outcome classes (Setting I) and unbalanced outcome classes (Setting II) with and without data contamination. We also consider a large model with $p=200$ and choose $n$ as to provide EPV of respectively $5$ and $3.75$, which are below the usually recommended value of $10$. The parameter values for the simulations are provided in Table~\ref{tab:sim-logistic}. 

 \begin{table}[!hb]
     \centering
     \caption{Simulation settings for the logistic regression.}
     \begin{tabular}{lrr}
 \toprule
 Parameters &  Setting I & Setting II \\
 \midrule
 $p=$ & $200$ & $200$ \\
 $n=$ & $2000$ & $3000$ \\
 $\sum_{i=1}^ny_i\approx$ & $1000$ & $750$ \\
 EPV $\approx$ & 5 & 3.75 \\
 $H=$ & $500$ & $500$ \\
 $\beta_1=\beta_2=$ & $5$ & $5$ \\
 $\beta_3=\beta_4=$ & $-7$ & $-7$ \\
 $\beta_5=\ldots=\beta_{200}=$ & $0$ & $0$ \\
 $\delta=$ & $0.01$ & $0.01$ \\
 Simulations & $1000$ & $1000$ \\
 \bottomrule
     \end{tabular}
     \label{tab:sim-logistic}
 \end{table}

The covariates were simulated (similarly to \citealp{CaSu:19}) independently from distributions $\mathcal{N}(0,4/\sqrt{n})$ for Setting I and $\mathcal{N}(0.6,4/\sqrt{n})$ for Setting II, in order to ensure that the size of the log-odds ratio $\mathbf{x}_i\bm{\beta}$ does not increase with $n$, so that $\mu_i(\bm{\beta})$ is not trivially equal to either $0$ or $1$. To contaminate the data, we choose a misclassification error that allows to observe a noticeable effect on the different estimators, which consists in permuting 2\% of the responses with corresponding larger (smaller) fitted probabilities (expectations). The simulation results are presented in Figure~\ref{fig:sim-logistic-bxp} as boxplots of the finite sample distribution, and in Figure~\ref{fig:sim-logistic-summary} as the bias and Root Mean Squared Error (RMSE) of the different estimators.

 \begin{figure}[!ht]
     \centering
     \includegraphics[width=14cm]{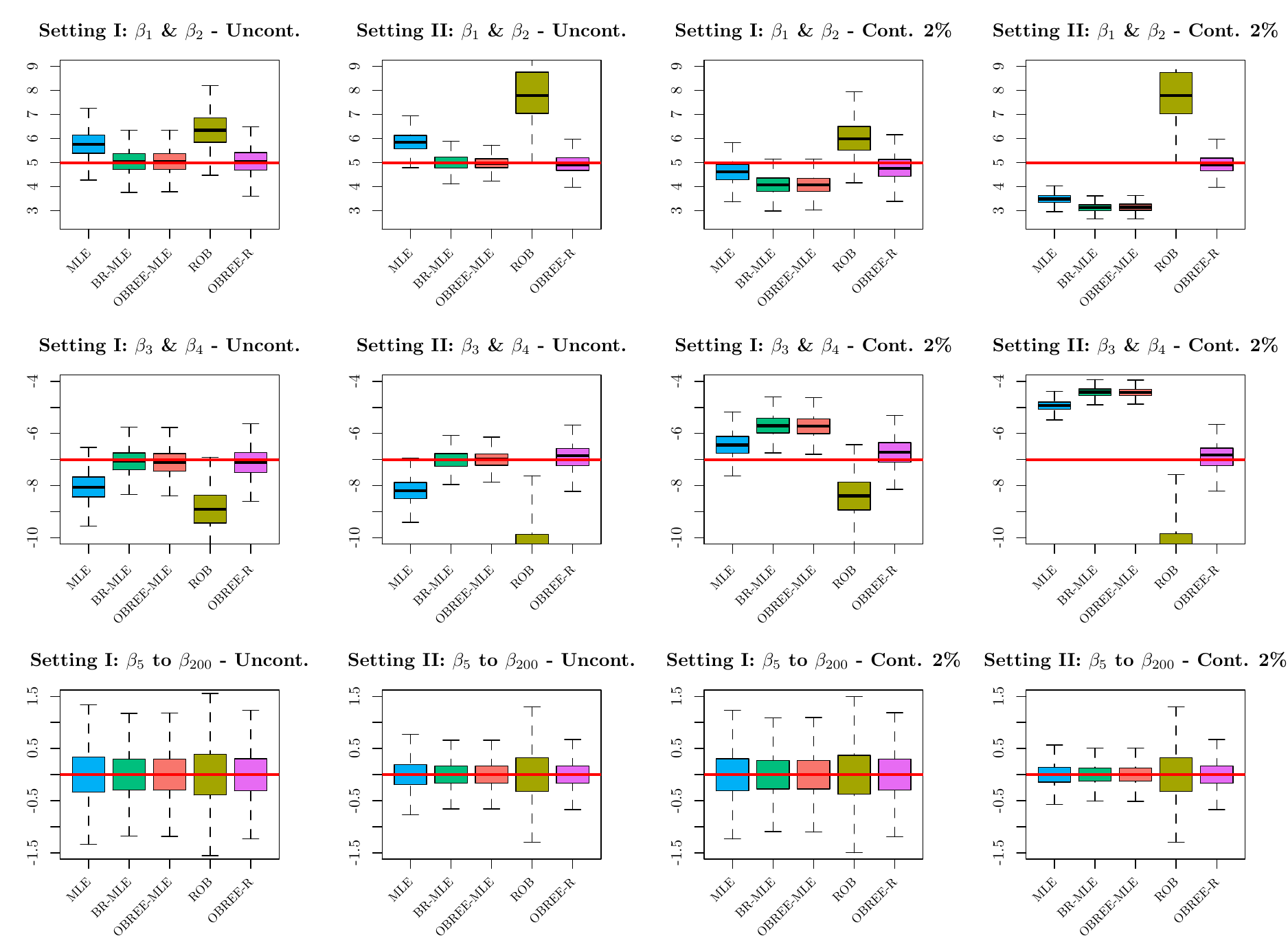}
     \caption{Finite sample distribution of estimators for the logistic regression using the simulation settings presented in Table \ref{tab:sim-logistic}. The estimators are the MLE (MLE), the Firth's bias reduced MLE (BR-MLE), the OBREE based on the MLE as initial estimator (OBREE-MLE), the robust estimator in \eqref{Eq_rob-glm} (ROB) and the OBREE with the robust estimator computed on the pseudo values \eqref{Eq_pseudo-val} as initial estimator (OBREE-R). For each simulation setting, $1000$ samples are generated.}
     \label{fig:sim-logistic-bxp}
 \end{figure}
\begin{figure}[!ht] 
    \centering
    \includegraphics[width=14cm]{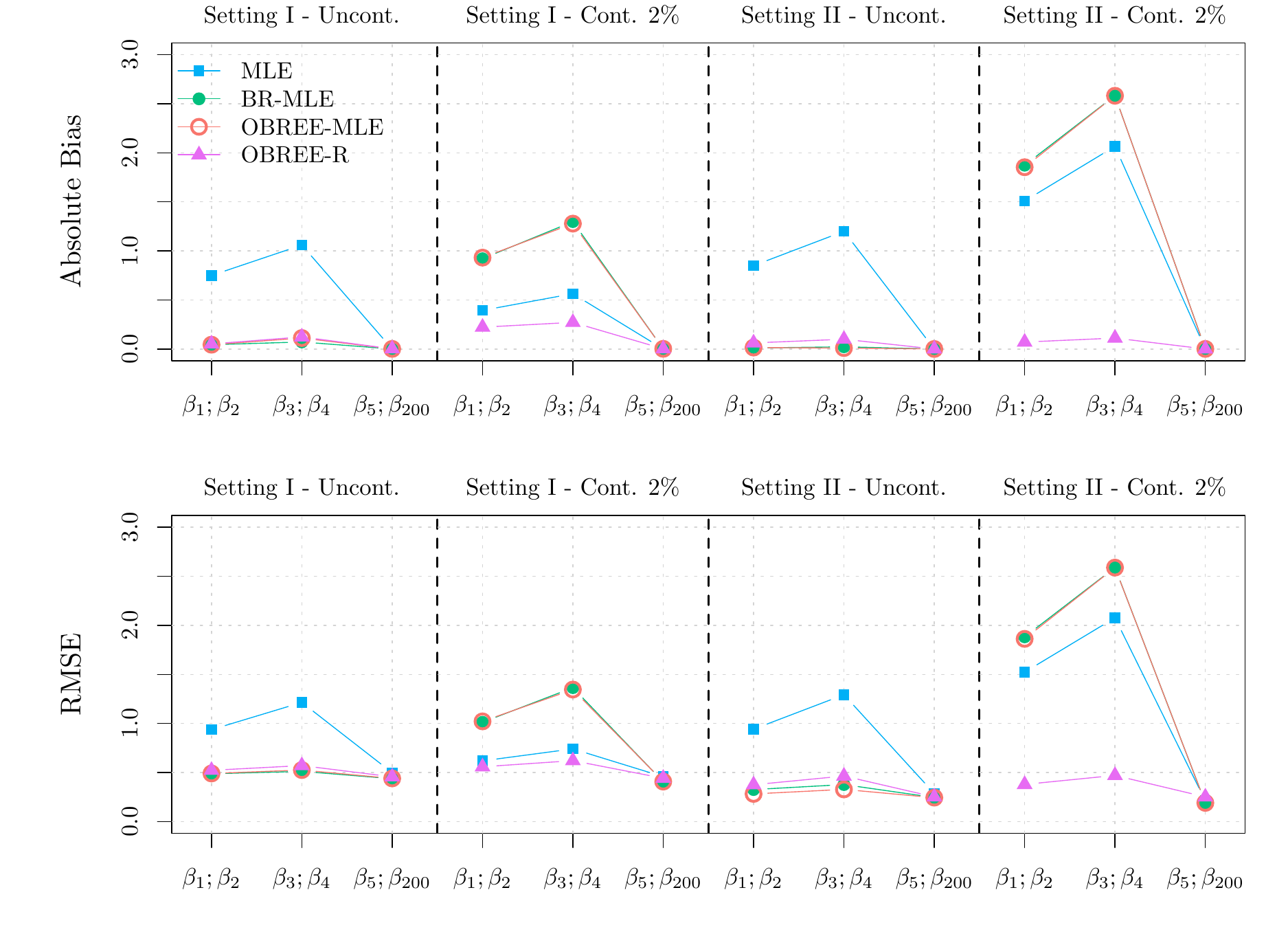}
    \caption{Finite sample bias and RMSE of estimators for the logistic regression using the simulation settings presented in Table \ref{tab:sim-logistic}. The estimators are the MLE (MLE), the Firth's bias reduced MLE (BR-MLE), the OBREE based on the MLE as initial estimator (OBREE-MLE) and the OBREE with the robust estimator in~\eqref{Eq_rob-glm} computed on the pseudo-values (\ref{Eq_pseudo-val}) as initial estimator (OBREE-R). Since the bias and RMSE of the robust estimator in~\eqref{Eq_rob-glm} (ROB) are much larger than the others, we omit them to avoid an unsuitable scaling of the graphs. For each simulation setting, $1000$ samples are generated.}
    \label{fig:sim-logistic-summary}
 \end{figure}
The finite sample distributions presented in Figure~\ref{fig:sim-logistic-bxp}, as well as the summary statistics given by the bias and RMSE presented in Figure~\ref{fig:sim-logistic-summary}, allow us to draw the following conclusions that support the theoretical results. In the uncontaminated case, the MLE and the robust estimator ROB are biased (except when the slope parameters are zero), however, the BR-MLE, OBREE-MLE and OBREE-R are all apparently unbiased. The results for the OBREE-MLE are in-line with its theoretical properties (in particular Theorem~\ref{thm:bias}). The OBREE-R appears to enjoy the same properties although its initial estimator has a ``small'' asymptotic bias (see Corollary \ref{coro:thm:bias}). This illustrates that our approach is applicable even when a consistent initial estimator is not available or when it is numerically unreliable (as is the case here). Moreover, the variability of all estimators is comparable, except for ROB which makes it rather inefficient in these settings. With 2\% of contaminated data (missclassification error), the only estimator whose behaviour remains stable compared to the uncontaminated data setting is the OBREE-R. This is in line with a desirable property of robust estimators, that is stability with or without (slight) data contamination. The behaviour of all estimators remains the same in both settings, that is, whether or not the responses are balanced. Finally, as argued above, a better proposal for a robust, bias reduced and consistent estimator, as an alternative to OBREE-R, could in principle be proposed, but this is left for further research. 

\subsection{Bias Reduced Estimator for the Random Intercept Logistic Regression}
\label{sec:Mlogistic}
An interesting way of extending the logistic regression to account for the dependence structure between the observed responses is to use the GLMM family \citep[see for example][and the references therein]{LeNe:01,McCuSe:01,JiangBook2007}. We consider here a commonly used model in practical settings, namely the random intercept model. The binary response is denoted by $\mathbf{Y}_{ij}\vcentcolon=\mathbf{Y}_{ij}(\bm{\beta}_0, n)$, where $i=1,\ldots,m$ and $j=1,\ldots,n_i$. The expected value of the response is expressed as
\begin{equation}
    \boldsymbol{\mu}_{ij}(\bm{\beta}\vert U_i) \vcentcolon = \mathbb{E}\left[\mathbf{Y}_{ij}\vert U_i\right] = \frac{\exp{\left( \mathbf{x}_{ij}^T\boldsymbol{\beta}+U_i\right)}}{1+\exp{\left(\mathbf{x}_{ij}^T\boldsymbol{\beta}+U_i\right)}},
    \label{Eq_linpredGLMM}
\end{equation}
where $\mathbf{x}_{ij}$ is a $q$-vector of covariates (possibly accounting for a fixed intercept),
$\boldsymbol{\beta}$ is a $q$-vector of regression coefficients and the random effect  $U_i,\; i = 1, ..., m$ is a normal random variable with zero mean and (unknown) variance $\sigma^2$.

Because the random effects are not observed, the MLE is derived on the marginal likelihood function where the random effects are integrated out. These integrals have no known closed-form solutions, so approximations to the (marginal) likelihood function have been proposed, including the 
Penalized Quasi-Likelihood (PQL) \citep[see for example][]{BrCl:93}, Laplace Approximations (LA) \citep[see for example][]{RaYaYo:00} and adaptive Gauss-Hermite Quadrature (GHQ) \citep[see for example][]{PiCh:06}. It is well known that PQL methods lead to biased estimators while LA and GHQ are more accurate \citep[for extensive accounts of methods across software and packages, see for example][]{BOLKER2009127,KiChEm:13}. The \texttt{lme4} R package \citep{BaMaBo:10} uses both the LA and GHQ to compute the likelihood function, while the \texttt{glmmPQL} R function of the \texttt{MASS} library (\citealp{VeRi:02}) uses the PQL.

\begin{table}[!hb]
     \centering
     \caption{Simulation settings for the logistic regression with a random intercept.}
     \begin{tabular}{lrr}
 \toprule
 Parameters &  Setting I & Setting II \\
 \midrule
 $p = q + 1=$ & $31$ & $31$ \\
 $m=$ & $5$ & $50$ \\
 $\forall i,\;n_i=n_\circ =$ & $50$ & $5$\\
 $n=$ & $250$ & $250$ \\
 $\sum_{i=1}^m\sum_{j=1}^{n_i} y_{i,j}\approx$ & $125$ & $125$   \\
 EPV $\approx$ & $4$ & $4$ \\
 $H=$ & $200$ & $200$ \\
 $\beta_0=$ & $0$ & $0$ \\
 $\beta_1=\beta_2=$ & $5$ & $5$ \\
 $\beta_3=\beta_4=$ & $-7$ & $-7$ \\
 $\beta_5=\ldots=\beta_{30}=$ & $0$ & $0$ \\
 $\sigma^2=$ & $1.5$ & $1.5$ \\
 Simulations & $1000$ & $1000$ \\
 \bottomrule 
     \end{tabular}
     \label{tab:sim-logistic2}
 \end{table}

In this section, we consider a simple approximation of the MLE as initial estimator for the OBREE. For computational efficiency, we choose the estimator defined through penalized iteratively reweighted least squares (P-IRLS) (see for example \citealp{BaMaBo:10}) as implemented in the function \texttt{glmer} function (with argument \texttt{nAGQ} set to 0) of the \texttt{lme4} R package. 

To study the behaviour of the OBREE and compare its performance in terms of bias and variance in finite samples to different approximations of the MLE (LA, GHQ or PQL), we perform a simulation study using the two settings described in Table~\ref{tab:sim-logistic2}. Both settings can be considered as high-dimensional in the sense that $p$ is relatively large with respect to $m$. Moreover, while Setting II ($m=50$, $n_i=n_\circ = 5, \forall i$) reflects a possibly more frequent situation, Setting I concerns the case where $m$ is small (and much smaller than $n_\circ$), a situation frequently encountered in cluster randomised trials \citep[see for example][and the references therein]{CRT-Huang-2016,CRT-Leyrat-2017}. As for the simulation study in Section~\ref{sec:logistic} on the logistic regression, the covariates are simulated independently from the distribution $\mathcal{N}(0,4/\sqrt{n})$.   

The finite sample distributions are illustrated in Figure~\ref{fig:sim-glmm-boxplot}.
Figure~\ref{fig:sim-glmm-summary} presents the finite sample bias and RMSE of the approximated MLE estimators and the OBREE. It can be observed that the proposed OBREE has a drastically reduced finite sample estimation bias, especially for the random effect variance estimator. Moreover, the OBREE also achieves the lowest RMSE. These simulation results are in-line with the theoretical properties of the OBREE and the simulation-based findings of~\citet{kuk1995asymptotically}. This has important advantages when performing inference in practice, and/or when the parameter estimates, such as the random intercept variance estimate, are used, for example, to evaluate the sample size needed in subsequent randomized trials. The OBREE can eventually be based on another initial estimators in order to further improve efficiency for example, however this study is left for future research.
\begin{figure}
 \centering
 \includegraphics[width=14cm]{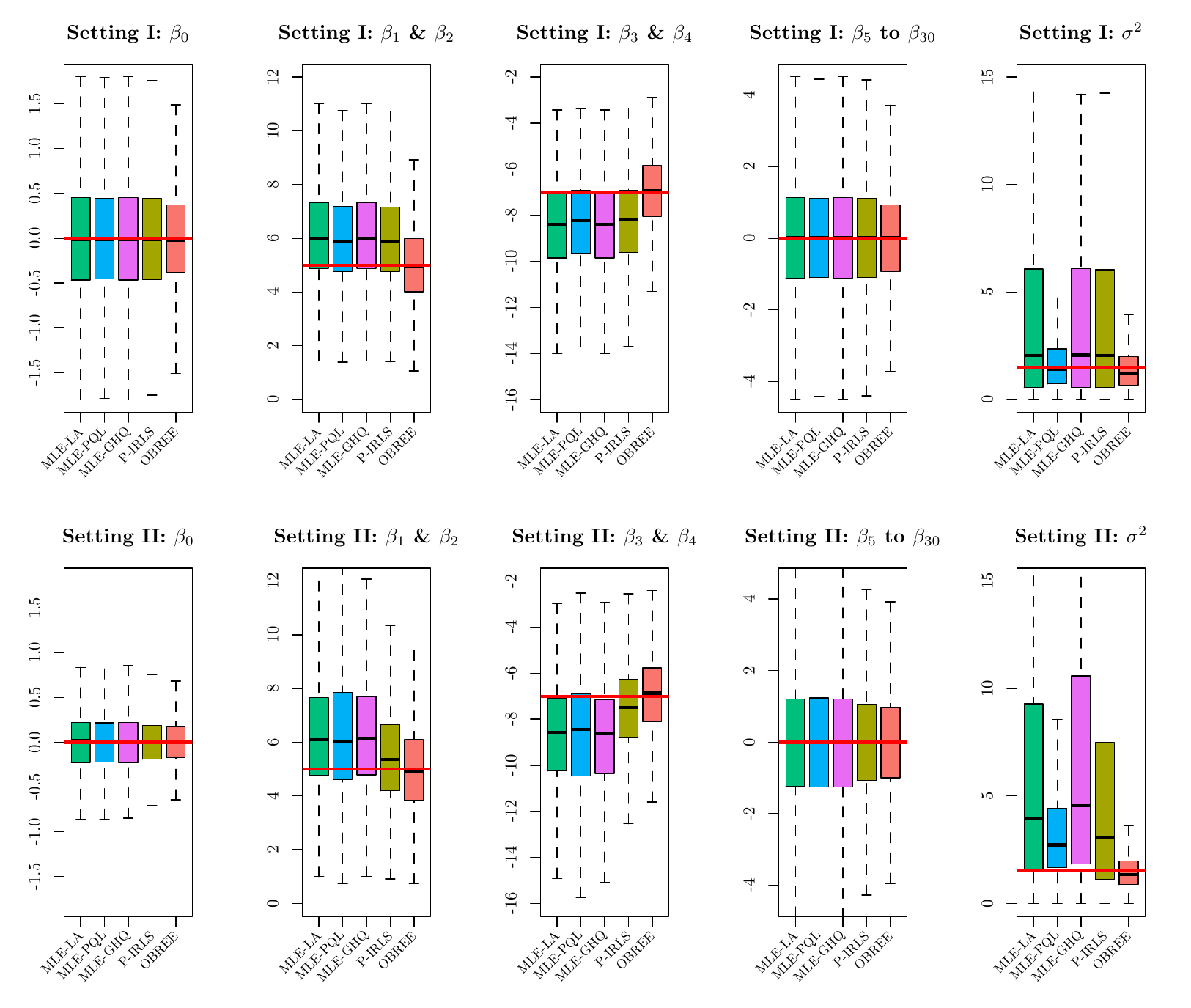}
  \caption{Finite sample distribution of estimators for the logistic regression with a random intercept, using the simulation settings presented in Table \ref{tab:sim-logistic2}. The estimators are the MLE with Laplace approximation (MLE-LA), the MLE with adaptive Gauss-Hermite quadratures (MLE-GHQ), the PQL (MLE-PQL), the penalized iteratively reweighted least squares (P-IRLS) and the OBREE with intital estimator based on the P-IRLS (OBREE). For each simulation setting, $1000$ samples are generated.}
 \label{fig:sim-glmm-boxplot}
\end{figure}

\begin{figure}
 \centering
 \includegraphics[width=14cm]{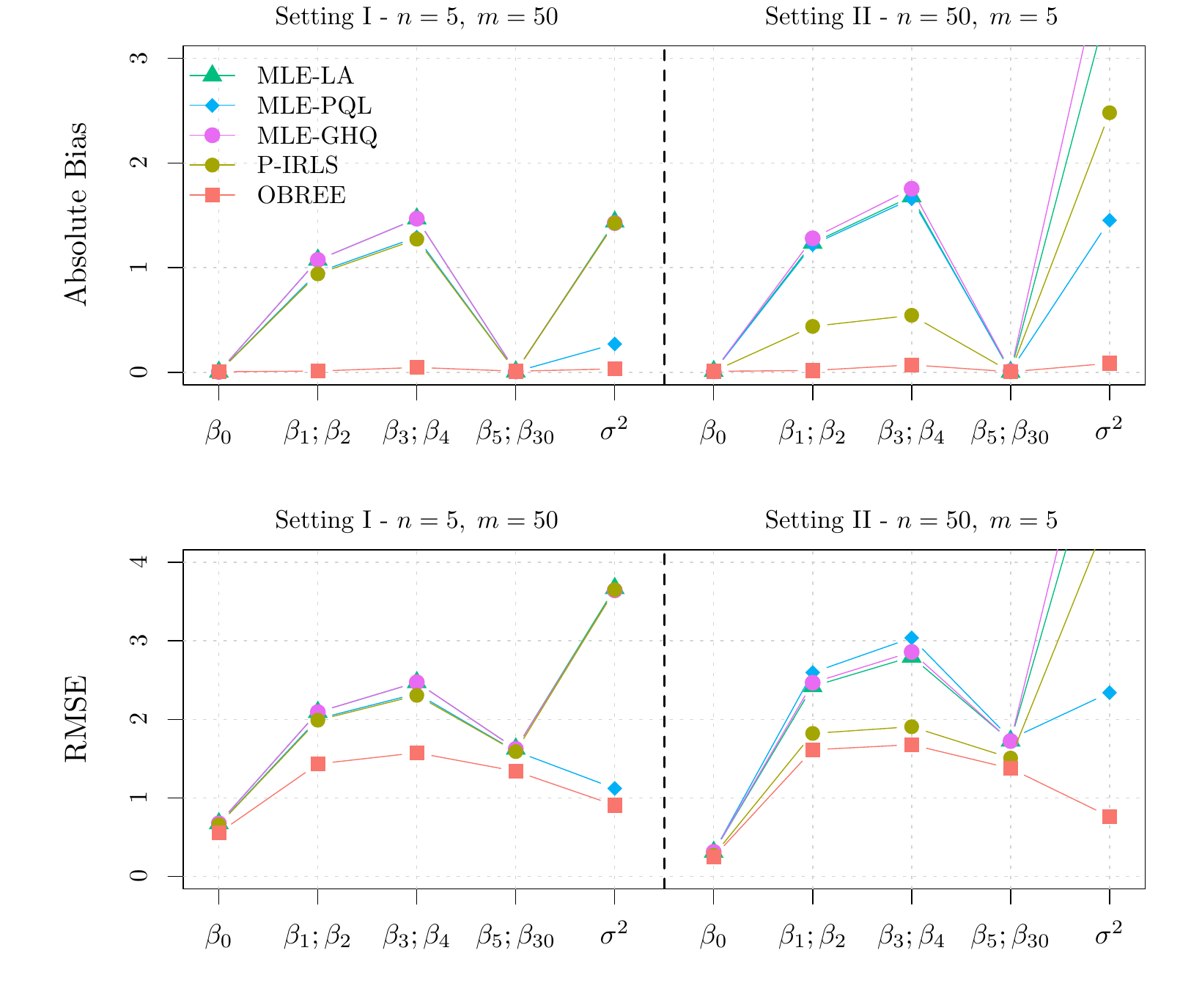}
  \caption{Finite sample bias and RMSE of estimators for the logistic regression with a random intercept, using the simulation settings presented in Table \ref{tab:sim-logistic2}. TThe estimators are the MLE with Laplace approximation (MLE-LA), the MLE with adaptive Gauss-Hermite quadratures (MLE-GHQ), the PQL (MLE-PQL), the penalized iteratively reweighted least squares (P-IRLS) and the OBREE with intital estimator based on the P-IRLS (OBREE). For each simulation setting, $1000$ samples are generated.}
 \label{fig:sim-glmm-summary}
\end{figure}

\newpage

\appendix
\titleformat{\section}[hang]{\large\center\scshape}{{\sc Appendix} \Alph{section}}{1em}{}

\section{Proof of Theorem \ref{thm:bias}}\label{app:unbias:IB}
In this appendix, we provide the proof of Theorem~\ref{thm:bias}. We also state and prove Corollary \ref{coro:thm:bias} which guarantees the optimal bias reduction property when the asymptotic bias is sub-linear.

Before proving Theorem \ref{thm:bias}, we introduce a particular asymptotic notation and state a lemma that provides a strategy for proving the optimal bias reduction property. Let $f(n)$ and $g(n)$ be real-valued functions with $g(n)$ being strictly positive for all $n \in \N^\ast\vcentcolon= \N\backslash\left\{0\right\}$. We write $f(n)=\mathcal{O}_{\delta\in\N^\ast}\left\{g(n)^\delta\right\}$ if and only if :
\begin{equation}
\label{def:newO}
\exists n' > 0, \;\; \exists M > 0  \;\; \text{such that} \;\; \forall \delta \in \N^\ast,  \;\; \text{and }\;\; \forall n \geq n', \;\; \lvert f(n)\rvert \leq M^\delta g(n)^\delta.
\end{equation}
Lemma~\ref{lemma:newO} and the following discussion below show how this particular notation is relevant in order to demonstrate Theorem~\ref{thm:bias}.
\begin{Lemma}
\label{lemma:newO}
Let $g(n)$ be a strictly positive real-valued function such that 
$\displaystyle\lim_{n\to\infty} g(n) = 0$.
If $f(n)=\mathcal{O}_{\delta \in \N^\ast}\left\{g(n)^\delta\right\}$,
then there exists a $n^\ast \in \N^\ast$ such that $f(n) = 0$ for all $n \geq n^\ast$.
\end{Lemma}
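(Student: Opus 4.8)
The plan is to exploit the feature of the definition \eqref{def:newO} that the bound holds \emph{simultaneously} for every exponent $\delta$, using one and the same pair of constants $M$ and $n'$. First I would unpack the hypothesis $f(n)=\mathcal{O}_{\delta\in\N^\ast}\{g(n)^\delta\}$ to obtain $n' > 0$ and $M > 0$ such that
\begin{equation*}
    \lvert f(n)\rvert \leq M^\delta g(n)^\delta = \left(M g(n)\right)^\delta \quad \text{for all } \delta \in \N^\ast \text{ and all } n \geq n'.
\end{equation*}

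Next, since $g(n) > 0$ and $\lim_{n\to\infty} g(n) = 0$, I would pick $n^\ast \in \N^\ast$ with $n^\ast \geq n'$ large enough that $M g(n) < 1$ for every $n \geq n^\ast$; such an $n^\ast$ exists because $g(n) \to 0$ forces $g(n) < 1/M$ eventually. Now fix any $n \geq n^\ast$. The quantity $\lvert f(n)\rvert$ does not depend on $\delta$, yet it is bounded above by $\left(M g(n)\right)^\delta$ for \emph{every} $\delta \in \N^\ast$. Because $0 < M g(n) < 1$, the right-hand side tends to $0$ as $\delta \to \infty$, so letting $\delta \to \infty$ in the inequality gives $\lvert f(n)\rvert \leq 0$, whence $f(n) = 0$. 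As $n \geq n^\ast$ was arbitrary, this yields $f(n) = 0$ for all $n \geq n^\ast$, which is the claim.

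The argument is short and I expect no serious obstacle. The only point that needs care is the observation that a single pair of constants $(M, n')$ serves for all $\delta$ at once: this is exactly what licenses sending $\delta \to \infty$ while holding $n$ fixed. The accompanying technical step is the passage to a possibly larger threshold $n^\ast \geq n'$, needed to ensure the geometric factor $M g(n)$ is strictly below $1$ so that $\left(M g(n)\right)^\delta \to 0$; without shrinking $g(n)$ below $1/M$ the limit argument would fail.
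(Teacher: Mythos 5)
Your proof is correct and follows essentially the same route as the paper: fix $n \geq n^\ast$ where $Mg(n) < 1$, then let $\delta \to \infty$ in the uniform bound $\lvert f(n)\rvert \leq \left(Mg(n)\right)^\delta$ to force $f(n) = 0$. If anything, your direct appeal to the definition of $\lim_{n\to\infty} g(n) = 0$ to get $Mg(n) < 1$ for \emph{all} $n \geq n^\ast$ is slightly cleaner than the paper's detour through a ``without loss of generality $g$ is decreasing'' step.
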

\begin{proof}
Since $\displaystyle\lim_{n\to\infty}g(n)=0$, there exists a $n^\ast \geq n'$ such that $Mg(n^\ast) < 1$. Without loss of generality we can suppose that the function $g(n)$ is decreasing\footnote{Indeed, we can define a decreasing step function $\tilde{g}(n)$ such that $\displaystyle\lim_{n\to 0}\tilde{g}(n) = 0$ and $g(n) \leq \tilde{g}(n)$, for all $n \in \N^\ast$.}. Therefore, for all $n \geq n^\ast$ we have $Mg(n) < 1$ and hence, 
\begin{equation*}
    \lvert f(n) \rvert \leq M^\delta g(n)^\delta \xrightarrow[\delta\to\infty]{}0.
\end{equation*}
In other words, $f(n)=0$ for all $n\geq n^\ast$ which ends the proof.
\end{proof}
he condition of Lemma~\ref{lemma:newO}, namely  $f(n)=\mathcal{O}_{\delta \in \N^\ast}\{g(n)^\delta\}$, may seem very strong. One may think for example that $f(n)=\mathcal{O}\{g(n)^\delta\}$ for all $\delta\in\N^\ast$ is sufficient. However, the exponential function provides a counter-example. Indeed, since $\displaystyle\lim_{n\to\infty} \exp(-n)\;n^{\delta}  = 0$ for all $\delta\in\N^\ast$, we have that  $\exp(-n)=\mathcal{O}(n^{-\delta})$ for all $\delta\in\N^\ast$, which means that
\begin{equation}\label{eq:exp:discuss}
  \forall \delta \in \N^\ast, \;\; \exists n_\delta > 0, \;\; \exists M_\delta > 0  \;\; \text{such that} \;\; \forall n \geq n_\delta, \;\; \exp(-n) \leq M_\delta n^{-\delta}.
\end{equation}
Since $\exp(-n)\neq0$ for all $n\in\N^\ast$, $f(n)=\mathcal{O}\{g(n)^\delta\}$ for all $\delta\in\N^\ast$ appears clearly to be not sufficient for reaching the conclusion of Lemma~\ref{lemma:newO}. 
One may attribute this failure to the dependence in $n_\delta$ in~\eqref{eq:exp:discuss} and propose the following stronger condition
\begin{equation}\label{eq:exp:discuss:1}
    \exists n' > 0, \;\; \forall \delta \in \N^\ast, \;\;  \exists M_\delta > 0  \;\; \text{such that} \;\; \forall n \geq n' \;\;  \lvert f(n) \rvert \leq M_\delta n^{-\delta}.
\end{equation}
Once again, the exponential function $\exp(-n)$ constitutes a counter-example. Indeed, setting $n'\vcentcolon=1$ and $M_\delta \vcentcolon= \delta !$, we have, for all $n \geq 1$ and all $\delta \in \N^\ast$, 
\begin{equation*}
    \exp(n)=\sum_{k=0}^{\infty}\frac{n^k}{k!}>\frac{n^\delta}{\delta!}\quad\Longrightarrow\quad\exp(-n)<\delta!\; n^{-\delta},
\end{equation*}
implying that $\exp(-n)$ satisfies \eqref{eq:exp:discuss:1}. However, we still have that $\exp(-n)\neq0$ for all $n\in\N^\ast$ and thus the conclusion of Lemma~\ref{lemma:newO} cannot be reached. Both of these examples suggest that the stronger condition in~\eqref{def:newO} is indeed necessary.

\begin{proof}[Proof of Theorem \ref{thm:bias}]
    By definition in~\eqref{eq:indirectInf:hTimesN}, we have 
	\begin{equation*}
	    \tilde{\bt} =  \bm{\pi}^*(\hat{\bt}, n).
	\end{equation*}
	Using Assumption~\ref{assum:B}, we re-express each side of the above equation as follows:
	\begin{equation*}
	    \begin{aligned}
	     \tilde{\bt} &= \bt_0 + \mathbf{b} (\bt_0, n) + \mathbf{v} \left(\bt_0, n\right)\\
	     \bm{\pi}^*(\hat{\bt}, n) =  \frac{1}{H} \sum_{h = 1}^H \tilde{\bt}^{*}_{h} &= \hat{\bt} + \mathbf{b} (\hat{\bt}, n) +  \frac{1}{H} \sum_{h = 1}^H  \mathbf{v}^*_h(\hat{\bt}, n),
	    \end{aligned}
	\end{equation*}
	where for all $h = 1, \dots , H$, $\mathbf{v}^*_h(\hat{\bt}, n) \vcentcolon= \tilde{\bt}^{*}_{h} - \bm{\pi}(\hat{\bt}, n)$ is a zero-mean random vector.
	We directly obtain
	\begin{equation*}
	    \mathbf{0} = \mathbb{E}\left[\bm{\pi}^*(\hat{\bt}, n) - \tilde{\bt} \right] = \mathbb{E}\left[\hat{\bt} - \bt_0\right] +\mathbb{E} \left[\mathbf{b} (\hat{\bt}, n) - \mathbf{b} (\bt_0, n)\right], 
	\end{equation*}
	which yields
	\begin{equation}
	    \mathbb{E}\left[\hat{\bt} - \bt_0\right] = - \mathbb{E} \left[\mathbf{b} (\hat{\bt}, n) - \mathbf{b} (\bt_0, n)\right].
	    \label{eq:proof:bias:inter:2}
	\end{equation}
    By Assumptions~\ref{assum:A} and~\ref{assum:B}, $\mathbf{b}(\hat{\bt},n)$ is a bounded random variable on a compact set and $\mathbf{b}(\bt,n) = \mathcal{O}(n^{-\beta})$ elementwise. We thus have 
    \begin{equation*}
        \mathbb{E} \left[\mathbf{b} (\hat{\bt}, n) - \mathbf{b} (\bt_0,n) \right] = \mathcal{O}\left(n^{-\beta}\right),
    \end{equation*} 
    elementwise. Consequently, we deduce from \eqref{eq:proof:bias:inter:2} that 
    \begin{equation}
      \lVert \mathbb{E} \left[ \hat{\bt} - \bt_0 \right]\rVert_\infty = \mathcal{O}\left(n^{-\beta}\right).
        \label{eq:proof:bias:inter:3}
    \end{equation}
    %

    The main idea is to re-evaluate $\mathbb{E}\left[\mathbf{b}(\hat{\bt},n)-\mathbf{b} (\bt_0, n)\right]$ using the mean value theorem as it allows to demonstrate by induction that, for all $\delta\in\N^\ast$, 
	\begin{equation*}
	     \Big\lVert\mathbb{E}\left[\hat{\bt}-\bt_0 \right]\Big\rVert_2 = \mathcal{O}\left(p^{\nicefrac{1}{2}}n^{-\delta\beta}\right).
	\end{equation*}
	Then, few extra steps enables to satisfy Lemma~\ref{lemma:newO} which concludes the proof.
	
    Applying the mean value theorem for vector-valued functions to $\mathbf{b}(\hat{\bt},n)-\mathbf{b}(\bt_0,n)$ we have
	\begin{equation*}
	    \mathbf{b}(\hat{\bt},n)-\mathbf{b}(\bt_0, n) = \mathbf{B}\left(\bt^{(\mathbf{b})},n\right) \left(\hat{\bt}-\bt_0\right).
	\end{equation*}
	where $\mathbf{B}\left(\bt,n\right)\vcentcolon =\frac{\partial}{\partial\,\bt^T} \mathbf{b}\left(\bt,n\right)\in\real^{p\times p}$ and $\bt^{(\mathbf{b})}$ corresponds to a set of $p$ vectors lying in the segment $(1 - \lambda) \hat{\bt} + \lambda {\bt}_{0}$ for $0\leq\lambda\leq 1$ (with respect to the function $\mathbf{b}(\bt,n)$). 
	By Assumptions \ref{assum:A} and \ref{assum:B}, $\mathbf{B}(\bt^{(\mathbf{b})},n)$ is also a bounded random variable. Moreover, by Assumption \ref{assum:B} again,  $\mathbf{B}(\bt^{(\mathbf{b})},n) = \mathcal{O}(p^{-1}n^{-\beta})$ elementwise since $\mathbf{b}(\bt,n) = \mathcal{O}(n^{-\beta})$ elementwise and 
	\begin{equation*}
	    \mathbf{b} (\bt, n) = \mathbf{b} (\bt_0, n) + \mathbf{B}\left(\bt^{(\mathbf{b})},n\right) \left(\bt - \bt_0 \right),
	\end{equation*}
	for all $\bt \in \bm\Theta$.
	For simplicity, we denote 
	\begin{equation*}
	    \mathbf{B} \vcentcolon= \mathbf{B
	    }\left(\bt^{(\mathbf{b})},n\right),\ \ \
	    \bm{\Delta}^{(\mathbf{b})} \vcentcolon=  \mathbf{b} (\hat{\bt}, n) - \mathbf{b} (\bt_0, n)
	    \ \ \ \text{and} \ \ \ \bm{\Delta} \vcentcolon= \hat{\bt} - \bt_0,
	\end{equation*} 
	which implies that $\bm{\Delta}^{(\mathbf{b})}=\mathbf{B}\bm{\Delta}$. Moreover, a consequence of \eqref{eq:proof:bias:inter:3} is that
	\begin{equation*}
	    \mathbb{E}\left[\bm{\Delta}_l\right]=\mathcal{O}\left(n^{-\beta}\right),
	\end{equation*}
	for any $l=1,\dots,p$ and hence 
	\begin{equation*}
	    \Vert\mathbb{E}\left[\bm{\Delta}\right] \Vert_2 = \mathcal{O}\left(p^{\nicefrac{1}{2}}n^{-\beta}\right).
	\end{equation*}
	Now, we have 
	\begin{equation}
	    \bm{\Delta}^{(\mathbf{b})}_l=\sum^{p}_{m=1}\mathbf{B}_{l,m}\bm{\Delta}_m \leq p \max_{m}\mathbf{B}_{l,m}\bm{\Delta}_m.
	    \label{proof:bias:eq:inter4}
	\end{equation}
    Using Cauchy-Schwarz inequality, we have 
    \begin{equation*}
      \mathbb{E}\left[\lvert\mathbf{B}_{l,m}\bm{\Delta}_m\rvert\right] \leq  \mathbb{E}\left[\mathbf{B}_{l,m}^2\right]^{\nicefrac{1}{2}} \mathbb{E}\left[\bm{\Delta}_m^2\right]^{\nicefrac{1}{2}}.
    \end{equation*}
	Since $\mathbf{B}_{l,m}$ and $\bm{\Delta}_m$ are bounded random variables on a compact set, $\mathbb{E}[\mathbf{B}_{l,m}]=\mathcal{O}(p^{-1}n^{-\beta})$ and $\mathbb{E}[\bm{\Delta}_m]=\mathcal{O}(n^{-\beta})$, so we have
	\begin{equation*}
        \mathbb{E} \left[ \mathbf{B}_{l,m}^2 \right]^{\nicefrac{1}{2}} \mathbb{E} \left[ \bm{\Delta}_m^2 \right]^{\nicefrac{1}{2}} =\mathcal{O}\left( p^{-1}n^{-2\beta} \right).
	\end{equation*}
	Therefore, we obtain 
	\begin{equation*}
    \mathbb{E}\left[\left\lvert\mathbf{B}_{l,m}\bm{\Delta}_m\right\rvert\right] = \mathcal{O}\left( p^{-1}n^{-2\beta} \right),
    \end{equation*}
	and hence, using \eqref{proof:bias:eq:inter4} we deduce that
	\begin{equation*}
      \mathbb{E}\left[\bm{\Delta}^{(\mathbf{b})}_l\right] \leq p\mathbb{E} \left[ \max_{m}\left\lvert\mathbf{B}_{l,m}\bm{\Delta}_m\right\rvert\right]=\mathcal{O}\left(n^{-2\beta}\right). 
    \end{equation*}
    Since $\mathbb{E}\left[\bm{\Delta}\right] = - \mathbb{E}\left[\bm{\Delta}^{(\mathbf{b})}\right]$, we have
	\begin{equation*}
	    \mathbb{E}\left[\bm{\Delta}_l\right]=\mathcal{O}\left(n^{-2\beta}\right),
	\end{equation*}
	and consequently,
	\begin{equation*}
	   \left\lVert\mathbb{E}\left[\bm{\Delta}\right]\right\rVert_2 =  \mathcal{O}\left(p^{\nicefrac{1}{2}}n^{-2\beta}\right).
	\end{equation*}
	Since $\mathbb{E}[\bm{\Delta}]=-\mathbb{E}[\bm{\Delta}^{(\mathbf{b})}]$ and $\mathbb{E}[\bm{\Delta}^{(\mathbf{b})}] = \mathbb{E}[\mathbf{B}\bm{\Delta}]$, one can repeat the same computations and deduce by induction that, for all $\delta\in\N^\ast$,
	\begin{equation*}
	    \Vert \mathbb{E}\left[\bm{\Delta}\right] \Vert_2 = \mathcal{O}\left(p^{\nicefrac{1}{2}}n^{-\delta\beta}\right).
	\end{equation*} 
	In order to show that 
	\begin{equation}\label{eq:unbias:perfect}
	    \Vert \mathbb{E}\left[\bm{\Delta}\right] \Vert_2 = \mathcal{O}_{\delta\in\N^\ast}\left(p^{\nicefrac{1}{2}}n^{-\delta\beta}\right),
	\end{equation}
	which implies $\Vert\mathbb{E}[\bm{\Delta}]\Vert_2= \mathcal{O}_{\delta\in\N^\ast}\left\{(p^{\nicefrac{1}{2}}n^{-\beta})^{\delta}\right\}$, it is sufficient to demonstrate that     
	\begin{equation}\label{eq:unbias:perfect:1}
	    \Vert \mathbb{E}\left[\bm{\Delta}\right] \Vert_\infty = \mathcal{O}_{\delta\in\N^\ast}\left(n^{-\delta\beta}\right).
	\end{equation}   
	Since for any $l,m=1,\cdots,p$, we have $\mathbb{E}[\lvert\bm{\Delta}_l\rvert]=\mathcal{O}(n^{-\beta})$ and $\mathbb{E}[\lvert\mathbf{B}_{l,m}\rvert]=\mathcal{O}(p^{-1}n^{-\beta})$, we obtain that 
	\begin{equation*}
	    \exists n_{\Delta} > 0, \;\; \exists M_{\Delta} > 0 \;\; \text{such that} \;\; \forall n \geq n_{\Delta}, \;\; \mathbb{E}\left[\lvert\bm{\Delta}_l\rvert\right] \leq M_{\Delta} n^{-\beta}
     \end{equation*}
    and
  	\begin{equation*}
        \exists n_{B} > 0, \;\; \exists M_{B} > 0 \;\; \text{such that} \;\; \forall n \geq n_{B}, \;\; \mathbb{E}\left[\lvert\mathbf{B}_{l,m}\rvert\right] \leq M_{\Delta} n^{-\beta}.
    \end{equation*}
	Without loss of generality, we suppose that $M_\Delta\geq1$ and $M_B\geq1$. Therefore, setting $n'\vcentcolon=\max\left(n_\Delta,n_B\right)$ and $M\vcentcolon=M_\Delta M_B$, our previous computations implies that, for all $\delta\in\N^\ast$ and all $n\geq n'$, we have
	\begin{equation*}
	   \lVert\mathbb{E}\left[\bm{\Delta}\right]\rVert_\infty \leq M^{\delta}n^{-\delta\beta}.
	\end{equation*}
	Hence \eqref{eq:unbias:perfect:1} holds true which ends the proof.
\end{proof}
	
Interestingly, the optimal asymptotic bias reduction property holds true in settings that are (way) more extreme than required by Assumption \ref{assum:B}, namely $p = o(n^{2\beta})$. Indeed, according to \eqref{eq:unbias:perfect}, the optimal asymptotic bias reduction property holds as long as $p^{\nicefrac{1}{2}} =~o(n^{\gamma})$ for some $\gamma \in  \N$, and in particular, even when $\gamma > 2\beta$. However, the condition $p = o(n^{2\beta})$ is used to prove the consistency of $\hat{\bt}$ thus justifying Assumption \ref{assum:B} as it is stated for readability purposes.

As previously said, the optimal bias reduction property of $\hat{\bt}$ can also be achieved when the initial estimator $\tilde{\bt}$ has a sub-linear asymptotic bias, i.e. $\tilde{\bt}$ can be written as follows,
\begin{equation}\label{eq:initial:estim:with:bias}
    \tilde{\bt}=\bt_0 + \mathbf{a}(\bt_0) +\mathbf{b}(\bt_0,n) + \mathbf{v}(\bt_0,n),
\end{equation}
where $\mathbf{a}(\bt_0)\vcentcolon=\mathbf{A}\bt_0+\mathbf{c}$ with $\mathbf{A}\in\real^{p\times p}$ and $\mathbf{c}\in\real^{p}$. In this case, the rate at which $p$ is allowed to grow with $n$ is more~restrictive.
\begin{Corollary}\label{coro:thm:bias}
Under Assumptions~\ref{assum:A},~\ref{assum:B} and~\ref{assum:C} and assuming that: (i) $\tilde{\bt}$ has the decomposition given in \eqref{eq:initial:estim:with:bias};  (ii) $\left(\mathbf{I} + \mathbf{A}\right)^{-1}$ exists; and (iii) there exists $\varepsilon>0$ such that  $p^{1+\varepsilon} = o(n^{\beta})$; we have $\big\lVert\mathbb{E}[\hat{\bt}]-\bt_0\big\rVert_2=\mathcal{O}(0)$.
\end{Corollary}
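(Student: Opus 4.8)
The plan is to follow the proof of Theorem~\ref{thm:bias} step for step, the sole novelty being the affine component $\mathbf{a}(\bt)=\mathbf{A}\bt+\mathbf{c}$ carried by the (now inconsistent) initial estimator. First I would start from the defining identity $\tilde{\bt}=\bm{\pi}^*(\hat{\bt},n)$ of~\eqref{eq:indirectInf:hTimesN} and insert the decomposition~\eqref{eq:initial:estim:with:bias} on each side. Because every simulated replicate $\tilde{\bt}^*_h(\hat{\bt})$ is the \emph{same} estimator evaluated under $F_{\hat{\bt}}$, its mean reproduces the same affine bias $\mathbf{a}(\hat{\bt})$; hence the two sides read $\bt_0+\mathbf{A}\bt_0+\mathbf{c}+\mathbf{b}(\bt_0,n)+\mathbf{v}(\bt_0,n)$ and $\hat{\bt}+\mathbf{A}\hat{\bt}+\mathbf{c}+\mathbf{b}(\hat{\bt},n)+\tfrac{1}{H}\sum_{h}\mathbf{v}^*_h(\hat{\bt},n)$. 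Taking expectations, the constant $\mathbf{c}$ cancels, the zero-mean terms $\mathbf{v}(\bt_0,n)$ and $\mathbf{v}^*_h(\hat{\bt},n)$ drop out, and the linear parts combine to give
\begin{equation*}
    (\mathbf{I}+\mathbf{A})\,\mathbb{E}\!\left[\hat{\bt}-\bt_0\right]=-\,\mathbb{E}\!\left[\mathbf{b}(\hat{\bt},n)-\mathbf{b}(\bt_0,n)\right].
\end{equation*}

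Next, invoking assumption~(ii) that $(\mathbf{I}+\mathbf{A})^{-1}$ exists, I would premultiply by it to obtain, with $\bm{\Delta}:=\hat{\bt}-\bt_0$,
\begin{equation*}
    \mathbb{E}[\bm{\Delta}]=-\,(\mathbf{I}+\mathbf{A})^{-1}\,\mathbb{E}\!\left[\mathbf{b}(\hat{\bt},n)-\mathbf{b}(\bt_0,n)\right].
\end{equation*}
This is exactly relation~\eqref{eq:proof:bias:inter:2} from the proof of Theorem~\ref{thm:bias}, now premultiplied by the fixed matrix $(\mathbf{I}+\mathbf{A})^{-1}$. From here the strategy is to recycle that proof: boundedness of $\mathbf{b}$ on the compact set $\bm\Theta$ together with $\mathbf{b}(\bt,n)=\mathcal{O}(n^{-\beta})$ (Assumptions~\ref{assum:A}--\ref{assum:B}) supplies a seed bound on $\|\mathbb{E}[\bm{\Delta}]\|_\infty$; the mean value theorem writes $\mathbf{b}(\hat{\bt},n)-\mathbf{b}(\bt_0,n)=\mathbf{B}\bm{\Delta}$ with $\mathbf{B}=\mathcal{O}(p^{-1}n^{-\beta})$ elementwise; and a Cauchy--Schwarz step sharpens the order by a factor $n^{-\beta}$ at each pass, yielding by induction a bound of the $\mathcal{O}_{\delta\in\N^\ast}$ form analogous to~\eqref{eq:unbias:perfect}, on which Lemma~\ref{lemma:newO} can finally be applied.

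The one genuine difference, and the point I expect to be the main obstacle, is carrying the factor $(\mathbf{I}+\mathbf{A})^{-1}$ through this induction. In Theorem~\ref{thm:bias} the contraction $\sum_m\mathbf{B}_{l,m}\bm{\Delta}_m$ produces a factor $p$ that is exactly absorbed by the $p^{-1}$ in the order of $\mathbf{B}$, so no dimensional factor survives and the effective per-step ratio is $p^{\nicefrac12}n^{-\beta}$ (requiring only $p=o(n^{2\beta})$). Here the extra contraction against $(\mathbf{I}+\mathbf{A})^{-1}$, whose entries and row sums are not norm-controlled a priori in growing dimension, injects an additional, \emph{uncompensated} power of $p$ into both the seed bound and each iteration. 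The effective per-step ratio is therefore inflated by a factor polynomial in $p$, and I would need to bound $(\mathbf{I}+\mathbf{A})^{-1}$ in a suitable (row-sum or operator) norm and verify that this inflated ratio still tends to zero. This is precisely what the strengthened growth condition~(iii), $p^{1+\varepsilon}=o(n^{\beta})$, guarantees --- it is more restrictive than Assumption~\ref{assum:B} exactly to offset these extra $p$-factors --- after which Lemma~\ref{lemma:newO} delivers $\|\mathbb{E}[\hat{\bt}]-\bt_0\|_2=\mathcal{O}(0)$. I would also note that, since $\tilde{\bt}$ is no longer consistent, the consistency of $\hat{\bt}$ needed to keep the mean value theorem's intermediate points inside $\bm\Theta$ follows from the same invertibility assumption~(ii) together with Assumption~\ref{assum:C}.
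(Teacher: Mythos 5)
Your proposal is correct and follows essentially the same route as the paper: derive $(\mathbf{I}+\mathbf{A})\,\mathbb{E}[\hat{\bt}-\bt_0]=-\mathbb{E}[\mathbf{b}(\hat{\bt},n)-\mathbf{b}(\bt_0,n)]$, invert, observe that the matrix--vector product with $(\mathbf{I}+\mathbf{A})^{-1}$ injects an extra power of $p$ into the seed bound and into each pass of the induction from Theorem~\ref{thm:bias} (the paper records this as $\lVert\mathbb{E}[\hat{\bt}-\bt_0]\rVert_2=\mathcal{O}_{\delta\in\N^\ast}\{(p^{\nicefrac{(\delta+1)}{\delta}}n^{-\beta})^{\delta}\}$), and use condition~(iii) together with Lemma~\ref{lemma:newO} to conclude. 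Your closing remarks --- that the entries of $(\mathbf{I}+\mathbf{A})^{-1}$ need an elementwise $\mathcal{O}(1)$ control for the extra factor to be exactly $p$, and that consistency of $\hat{\bt}$ must still hold to keep the mean-value points in $\bm\Theta$ --- are points the paper's own (very terse) proof leaves implicit rather than genuine departures.
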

Even if the optimal bias reduction can be achieved when the asymptotic bias is large (there are no constraints on the norms of $\mathbf{A}$ and $\mathbf{c}$), in practice, the sub-linear assumption is more likely to be (nearly) satisfied when the asymptotic bias small as illustrated by our simulation studies (see Section \ref{sec:applica}).
\begin{proof}
Using the same reasoning as in the proof of Theorem \ref{thm:bias}, equality \eqref{eq:proof:bias:inter:2} becomes, in this case,
\begin{equation}
    \mathbb{E}\left[\hat{\bt} - \bt_0\right] = - \left(\mathbf{I} + \mathbf{A}\right)^{-1}\mathbb{E} \left[\mathbf{b} (\hat{\bt}, n) - \mathbf{b} (\bt_0, n)\right],
\end{equation}
which implies that 
\begin{equation}
    \mathbb{E} \left[\hat{\bt} - \bt_0 \right] = \mathcal{O}\left(pn^{-\beta}\right),
\end{equation}
elementwise.
The rest of the argument is similar to the one in the proof of Theorem \ref{thm:bias} and leads to 	
\begin{equation*}
	    \big\Vert \mathbb{E}[\hat{\bt} - \bt_0] \big\Vert_2 = \mathcal{O}_{\delta\in\N^\ast}\left\{\left(p^{\frac{\delta+1}{\delta}}n^{-\beta}\right)^{\delta}\right\}.
\end{equation*}
Since $\frac{\delta+1}{\delta}\to 1$, when $\delta\to\infty$, the previous equality implies, by the last assumption of Corollary \ref{coro:thm:bias}, that 
 $   \big\Vert\mathbb{E}[\hat{\bt}-\bt_0]\big\Vert_2 = \mathcal{O}_{\delta\in\N^\ast}\left\{\left(p^{1+\varepsilon}n^{-\beta}\right)^{\delta}\right\}$,
which ends the proof by Lemma~\ref{lemma:newO}.
\end{proof}

\section{Proof of Proposition \ref{thm:stat_prop}}\label{app:stat_prop}

The proof of Proposition~\ref{thm:stat_prop} is split into two lemmas. The first concerns the consistency of the OBREE $\hat{\bt}$ and the second its asymptotic normality.
\begin{Lemma}\label{lemma:consist}
Under Assumptions~\ref{assum:A},~\ref{assum:B} and~\ref{assum:C}, the OBREE is such that 
\begin{equation*}
\big\lVert \hat{\bt} - \bt_0 \big\rVert_2
    = o_{\rm p}(1).
\end{equation*}
\end{Lemma}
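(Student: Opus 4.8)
The plan is to start from the defining identity of the OBREE and extract a linear relation for $\hat{\bt}-\bt_0$ by exploiting the smoothness of the bias function, exactly as in the proof of Theorem~\ref{thm:bias}. By~\eqref{eq:indirectInf:hTimesN} the estimator satisfies $\tilde{\bt}=\bm{\pi}^*(\hat{\bt},n)$. Decomposing both sides with Assumption~\ref{assum:B} gives
\begin{equation*}
\bt_0+\mathbf{b}(\bt_0,n)+\mathbf{v}(\bt_0,n)=\hat{\bt}+\mathbf{b}(\hat{\bt},n)+\frac{1}{H}\sum_{h=1}^H\mathbf{v}^*_h(\hat{\bt},n),
\end{equation*}
so that, writing $\bar{\mathbf{v}}^*(\hat{\bt},n):=\frac{1}{H}\sum_{h=1}^H\mathbf{v}^*_h(\hat{\bt},n)$,
\begin{equation*}
\hat{\bt}-\bt_0=-\left[\mathbf{b}(\hat{\bt},n)-\mathbf{b}(\bt_0,n)\right]+\mathbf{v}(\bt_0,n)-\bar{\mathbf{v}}^*(\hat{\bt},n).
\end{equation*}
First I would apply the mean value theorem (componentwise) to the bias difference, $\mathbf{b}(\hat{\bt},n)-\mathbf{b}(\bt_0,n)=\mathbf{B}(\bt^{(\mathbf{b})},n)(\hat{\bt}-\bt_0)$, where $\mathbf{B}=\partial\mathbf{b}/\partial\bt^{T}$ is evaluated at mean-value points on the segment joining $\hat{\bt}$ and $\bt_0$. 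Rearranging yields the linear system
\begin{equation*}
\left(\mathbf{I}+\mathbf{B}\right)\left(\hat{\bt}-\bt_0\right)=\mathbf{v}(\bt_0,n)-\bar{\mathbf{v}}^*(\hat{\bt},n),\qquad \mathbf{B}:=\mathbf{B}(\bt^{(\mathbf{b})},n).
\end{equation*}

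Next I would show that $\mathbf{I}+\mathbf{B}$ is invertible with a well-controlled inverse. As established in the proof of Theorem~\ref{thm:bias}, Assumptions~\ref{assum:A} and~\ref{assum:B} give the elementwise bound $\mathbf{B}_{l,m}=\mathcal{O}(p^{-1}n^{-\beta})$ uniformly over the compact set $\bm\Theta$, hence $\lVert\mathbf{B}\rVert_2\leq\lVert\mathbf{B}\rVert_F=\mathcal{O}(n^{-\beta})=o(1)$. Consequently, for $n$ large enough $\mathbf{I}+\mathbf{B}$ is nonsingular with $\lVert(\mathbf{I}+\mathbf{B})^{-1}\rVert_2=\mathcal{O}(1)$, and therefore
\begin{equation*}
\left\lVert\hat{\bt}-\bt_0\right\rVert_2\leq\mathcal{O}(1)\left(\left\lVert\mathbf{v}(\bt_0,n)\right\rVert_2+\left\lVert\bar{\mathbf{v}}^*(\hat{\bt},n)\right\rVert_2\right).
\end{equation*}

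It then remains to show that both stochastic terms on the right are $o_p(1)$. The first is immediate: by Assumption~\ref{assum:C}, $\lVert\mathbf{v}(\bt_0,n)\rVert_\infty=\mathcal{O}_p(n^{-\alpha})$, so $\lVert\mathbf{v}(\bt_0,n)\rVert_2\leq\sqrt{p}\,\lVert\mathbf{v}(\bt_0,n)\rVert_\infty=\mathcal{O}_p\{(p/n^{2\alpha})^{1/2}\}=o_p(1)$, using $p=o(n^{2\alpha})$. The second term is the main obstacle, because $\hat{\bt}$ is random and appears inside $\bar{\mathbf{v}}^*(\cdot,n)$; a pointwise rate from Assumption~\ref{assum:C} is not enough and one cannot substitute $\hat{\bt}$ naively. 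I would therefore upgrade the pointwise control to a uniform one, bounding $\lVert\bar{\mathbf{v}}^*(\hat{\bt},n)\rVert_2\leq\sup_{\bt\in\bm\Theta}\lVert\bar{\mathbf{v}}^*(\bt,n)\rVert_2$ and arguing that this supremum is $o_p(1)$.

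This uniform bound is the crux of the argument and follows from the finiteness and continuity in $\bt$ of the variance of $\mathbf{v}(\bt,n)$ (Assumption~\ref{assum:C}) together with the compactness of $\bm\Theta$ (Assumption~\ref{assum:A}): these yield $\sup_{\bt\in\bm\Theta}\lVert\bar{\mathbf{v}}^*(\bt,n)\rVert_\infty=\mathcal{O}_p(n^{-\alpha})$ through a stochastic-equicontinuity / uniform-law-of-large-numbers argument, whence $\sup_{\bt\in\bm\Theta}\lVert\bar{\mathbf{v}}^*(\bt,n)\rVert_2=\mathcal{O}_p\{(p/n^{2\alpha})^{1/2}\}=o_p(1)$. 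Because the bound is uniform over $\bm\Theta$ and $\hat{\bt}\in\bm\Theta$ by definition~\eqref{eq:indirectInf:hTimesN}, evaluating at the random $\hat{\bt}$ introduces no circularity. Combining the three displays gives $\lVert\hat{\bt}-\bt_0\rVert_2=\mathcal{O}(1)\cdot o_p(1)=o_p(1)$, which is the claim.
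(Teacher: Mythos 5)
Your argument is correct in substance but follows a genuinely different route from the paper. The paper proves consistency by casting $\hat{\bt}$ as an extremum estimator and verifying the four conditions of Theorem 2.1 of \citet{newey1994large} for $Q(\bt)=\lVert\bt_0-\bt\rVert_2$ and $\widehat{Q}(\bt,n)=\lVert\tilde{\bt}-\bm{\pi}^\ast(\bt,n)\rVert_2$, the only non-trivial step being the uniform convergence $\sup_{\bt\in\bm\Theta}\lvert\widehat{Q}(\bt,n)-Q(\bt)\rvert=\mathcal{O}_{\rm p}(\sqrt{p}\,n^{-\alpha})+\mathcal{O}(\sqrt{p}\,n^{-\beta})$. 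You instead invert the fixed-point identity directly: the mean value expansion gives $(\mathbf{I}+\mathbf{B})(\hat{\bt}-\bt_0)=\mathbf{v}(\bt_0,n)-\bar{\mathbf{v}}^\ast(\hat{\bt},n)$ with $\lVert\mathbf{B}\rVert_2\leq\lVert\mathbf{B}\rVert_F=\mathcal{O}(n^{-\beta})$ (the same Frobenius-norm computation the paper uses in the proof of Proposition \ref{prop:ib}), so $\mathbf{I}+\mathbf{B}$ is eventually invertible with $\mathcal{O}(1)$ inverse and the error is controlled by the two noise terms. This buys something the paper's route does not: an explicit rate $\lVert\hat{\bt}-\bt_0\rVert_2=\mathcal{O}_{\rm p}(\sqrt{p}\,n^{-\alpha})$ rather than bare $o_{\rm p}(1)$, and it makes transparent exactly where invertibility of the ``identity plus bias-Jacobian'' matters (compare condition (ii) of Corollary \ref{coro:thm:bias}). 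The price is that you must handle the random evaluation point in $\bar{\mathbf{v}}^\ast(\hat{\bt},n)$, which you correctly reduce to $\sup_{\bt\in\bm\Theta}\lVert\bar{\mathbf{v}}^\ast(\bt,n)\rVert_2=o_{\rm p}(1)$; you assert this via a stochastic-equicontinuity argument without supplying it, but note that Assumption \ref{assum:C} is stated pointwise in $\bt$ and the paper's own proof takes exactly the same supremum and invokes the same assumption, so both arguments implicitly read the $\mathcal{O}_{\rm p}(n^{-\alpha})$ bound as uniform over the compact $\bm\Theta$ --- your proof is no less rigorous than the paper's on this point, merely more explicit about where the uniformity is needed. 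A second shared technicality is that the componentwise mean value theorem requires the segment joining $\hat{\bt}$ and $\bt_0$ to lie in the domain where $\mathbf{b}(\cdot,n)$ is differentiable, which implicitly asks for convexity of $\bm\Theta$ (or an extension of $\mathbf{b}$); the paper's proofs of Theorem \ref{thm:bias} and Proposition \ref{prop:ib} make the same implicit assumption.
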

\begin{proof}
This proof is directly obtained by verifying the conditions of Theorem 2.1 of \citet{newey1994large} on the functions $Q(\bt)$ and $\widehat{Q}(\bt, n)$ defined as follow:
\begin{equation*}
    Q(\bt)\vcentcolon=\big\Vert\bt_0-\bt\big\Vert_2,\qquad\widehat{Q}(\bt,n)\vcentcolon= \big\Vert\tilde{\bt}-\bm{\pi}^\ast(\bt, n)\big\Vert_2,
\end{equation*}
where $\tilde{\bt}=\bm{\pi}(\bt_0,n) + \mathbf{v}(\bt_0,n) = \bt_0 + \mathbf{b}(\bt_0,n)+\mathbf{v}(\bt_0,n)$ and $\bm{\pi}^\ast(\bt, n) = \frac{1}{H} \sum_{h = 1}^H \tilde{\bt}^\ast_h$. 
Reformulating the requirements of this theorem to our setting, we have to show that  (i) $\bt$ is compact, (ii) ${Q}(\bt)$ is continuous, (iii) ${Q}(\bt)$ is uniquely minimized at $\bt_0$, (iv) $\widehat{Q}(\bt, n)$ converges uniformly in probability to $Q(\bt)$.

On the one hand, Assumption~\ref{assum:A} ensures that $\bt$ is compact. On the other hand, ${Q}(\bt)$ is trivially continuous and uniquely minimized at $\bt_0$. What remains to be shown is that $\widehat{Q}(\bt, n)$ converges uniformly in probability to $Q(\bt)$, which is equivalent to show that: for all $ \varepsilon > 0$ and for all $ \delta > 0$, there exists a sample size $n^\ast\in\N^\ast$ such that for all $n \geq n^\ast$
\begin{equation*}
    \Pr\left\{\sup_{\bt\in\bm\Theta}\;\Big\lvert\widehat{Q}(\bt, n)-Q(\bt)\Big\rvert\geq \varepsilon\right\}\leq\delta.
\end{equation*}
Fix $\varepsilon > 0$ and $\delta > 0$. Using the above definitions, we have that
\begin{equation}
    \sup_{\bt \in \bm\Theta}\;\Big\lvert\widehat{Q}(\bt,n)-Q(\bt)\Big\rvert\leq \sup_{\bt \in \bm\Theta} \; \left[\left\lvert\widehat{Q}(\bt,n)-Q(\bt,n)\right\rvert + \left\lvert Q(\bt, n)-Q(\bt) \right\rvert\right],
    \label{eq:convergen_proof_eq}
\end{equation}
where
\begin{equation*}
        {Q}(\bt, n)\vcentcolon = \big\Vert{\bm{\pi}}(\bt_0, n) - {\bm{\pi}}(\bt, n)\big\Vert_2.
\end{equation*}
Considering the first term on the right hand side of \eqref{eq:convergen_proof_eq}, we have
\begin{equation*}
  \begin{aligned}
      \Big\lvert\widehat{Q}(\bt, n) - Q(\bt, n) \Big\rvert 
      &\leq  \big\Vert \tilde{\bt} - \bm{\pi}^{*}\left(\bt, n\right) -   {\bm{\pi}}(\bt_0, n)  +  {\bm{\pi}}(\bt, n) \big\Vert_2 \\
      &\leq  \big\Vert \tilde{\bt} - {\bm{\pi}}(\bt_0, n) \big\Vert_2 + \big\Vert {\bm{\pi}}(\bt, n) - \bm{\pi}^{*}\left(\bt, n\right) \big\Vert_2 \\
      &= \big\Vert \mathbf{v} \left(\bt_0, n\right) \big\Vert_2 + \Big\Vert \frac{1}{H}\sum_{h=1}^H\mathbf{v}^{*}_h \left(\bt, n\right) \Big\Vert_2 \\
      &= \mathcal{O}_{\rm p} \left(\sqrt{p} n^{-\alpha} \right) + \mathcal{O}_{\rm p} \left(\sqrt{p} n^{-\alpha} H^{-\nicefrac{1}{2}}\right) = \mathcal{O}_{\rm p} \left(\sqrt{p} n^{-\alpha} \right),
  \end{aligned} 
\end{equation*}
by Assumption \ref{assum:C}. Similarly, we have
\begin{equation*}
  \begin{aligned}
    \Big\lvert{Q}(\bt, n) - Q(\bt) \Big\rvert &\leq \big\Vert {\bm{\pi}}(\bt_0, n) - {\bm{\pi}}(\bt, n) - \bt_0 +\bt \big\Vert_2  \\
      &=  \big\Vert \mathbf{b}(\bt_0, n) - \mathbf{b}(\bt, n)   \big\Vert_2 = \mathcal{O} \left(\sqrt{p}  n^{-\beta}\right),
  \end{aligned} 
\end{equation*}
by Assumption \ref{assum:B}. Therefore, we obtain
\begin{equation*}
    \sup_{\bt \in \bm\Theta} \; \Big\lvert\widehat{Q}(\bt,n) - Q(\bt)\Big\rvert = \mathcal{O}_{\rm p} \left(\sqrt{p} n^{-\alpha}\right) + \mathcal{O} \left(\sqrt{p}  n^{-\beta}\right).
\end{equation*}
By Assumptions \ref{assum:B} and \ref{assum:C}, there exists a sample size $n^\ast \in\N^\ast$ such that for all $n\in\N^*$ satisfying $n \geq n^\ast$ we have
\begin{equation*}
    \Pr \left\{ \sup_{\bt \in \bm\Theta} \; \Big\lvert\widehat{Q}(\bt, n)-Q(\bt) \Big\rvert\geq\varepsilon\right\}\leq\delta.
\end{equation*}
Therefore, the four condition of Theorem 2.1 of \citet{newey1994large} are verified implying the result.
\end{proof}
\begin{Lemma}\label{lemma:asym:norm}
Under Assumptions \ref{assum:A} to \ref{assum:D}, for any $\mathbf{s} \in \real^p$ such that $\lVert\mathbf{s}\rVert_2 = 1$ the OBREE satisfies
\begin{equation*}
    \sqrt{n} \mathbf{s}^{T}\left\{\left(1 + \frac{1}{H}\right)\bm{\Sigma}(\bt_0)\right\}^{-\nicefrac{1}{2}}\left(\hat{\bt} - \bt_0\right) \xrightarrow{\;d\;} \mathcal{N}\left(\mathbf{0}, 1\right).
\end{equation*}
\end{Lemma}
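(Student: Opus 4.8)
The plan is to linearise the fixed-point relation defining $\hat{\bt}$ and thereby reduce the limiting law of $\hat{\bt} - \bt_0$ to that of a sum of (asymptotically) independent noise terms. Starting from the identity $\tilde{\bt} = \bm{\pi}^\ast(\hat{\bt}, n)$ implied by \eqref{eq:indirectInf:hTimesN}, I would expand both sides exactly as in the proof of Theorem~\ref{thm:bias}, writing $\tilde{\bt} = \bt_0 + \mathbf{b}(\bt_0, n) + \mathbf{v}(\bt_0, n)$ and $\bm{\pi}^\ast(\hat{\bt}, n) = \hat{\bt} + \mathbf{b}(\hat{\bt}, n) + \frac{1}{H}\sum_{h=1}^H \mathbf{v}^\ast_h(\hat{\bt}, n)$. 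Equating the two and applying the mean value theorem to $\mathbf{b}(\hat{\bt}, n) - \mathbf{b}(\bt_0, n) = \mathbf{B}(\bt^{(\mathbf{b})}, n)(\hat{\bt} - \bt_0)$ gives the linearised identity
\begin{equation*}
\left(\mathbf{I} + \mathbf{B}\right)\left(\hat{\bt} - \bt_0\right) = \mathbf{v}(\bt_0, n) - \frac{1}{H}\sum_{h=1}^H \mathbf{v}^\ast_h(\hat{\bt}, n),
\end{equation*}
with $\mathbf{B} \vcentcolon= \mathbf{B}(\bt^{(\mathbf{b})}, n)$.

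I would then argue that the matrix factor is asymptotically the identity. As shown in the proof of Theorem~\ref{thm:bias}, $\mathbf{B}$ has entries of order $\mathcal{O}(p^{-1}n^{-\beta})$, so its Frobenius (hence spectral) norm is $\mathcal{O}(n^{-\beta}) \to 0$ under Assumption~\ref{assum:B}; consequently $(\mathbf{I} + \mathbf{B})^{-1}$ exists for $n$ large and, by a Neumann expansion, equals $\mathbf{I} + \mathcal{O}(n^{-\beta})$. Writing $\hat{\bt} - \bt_0 = (\mathbf{I} + \mathbf{B})^{-1}\{\mathbf{v}(\bt_0, n) - \frac{1}{H}\sum_{h=1}^H \mathbf{v}^\ast_h(\hat{\bt}, n)\}$ and applying the $\sqrt{n}\,\mathbf{s}^T[(1 + 1/H)\bm{\Sigma}(\bt_0)]^{-1/2}$ scaling, the contribution of the $\mathcal{O}(n^{-\beta})$ correction is $o_{\rm p}(1)$ in each fixed direction $\mathbf{s}$ under the rate conditions of Assumptions~\ref{assum:B} and~\ref{assum:C} (equivalently $p/n \to 0$ together with Assumption~\ref{assum:D}). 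A Slutsky argument then reduces the claim to determining the limit of $\sqrt{n}\,\mathbf{s}^T[(1 + 1/H)\bm{\Sigma}(\bt_0)]^{-1/2}\{\mathbf{v}(\bt_0, n) - \frac{1}{H}\sum_{h=1}^H \mathbf{v}^\ast_h(\hat{\bt}, n)\}$.

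To identify this limit I would treat the two noise components separately and then recombine them. Since the limit in Assumption~\ref{assum:D} is centred at zero while the deterministic bias enters only as the shift $\sqrt{n}\,\mathbf{s}^T\bm{\Sigma}(\bt_0)^{-1/2}\mathbf{b}(\bt_0, n)$, this shift must vanish, so Assumption~\ref{assum:D} yields $\sqrt{n}\,\mathbf{s}^T\bm{\Sigma}(\bt_0)^{-1/2}\mathbf{v}(\bt_0, n) \xrightarrow{\;d\;} \mathcal{N}(0, 1)$ with covariance $\bm{\Sigma}(\bt_0)/n$. Each replicate $\mathbf{v}^\ast_h(\hat{\bt}, n)$ is the same centred quantity computed on a sample drawn from $F_{\hat{\bt}}$ with auxiliary random numbers independent of the data; applying Assumption~\ref{assum:D} at the parameter value $\hat{\bt}$, using the consistency of $\hat{\bt}$ (Lemma~\ref{lemma:consist}) and the continuity and nonsingularity of $\bm{\Sigma}$, each rescaled replicate is asymptotically standard normal with covariance $\bm{\Sigma}(\bt_0)/n$. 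Because the $H$ simulation seeds are mutually independent and independent of the data, the $\mathbf{v}^\ast_h$ are (conditionally on $\hat{\bt}$) independent of one another and of $\mathbf{v}(\bt_0, n)$; hence the averaged simulation noise contributes covariance $\frac{1}{H}\bm{\Sigma}(\bt_0)/n$, the original-sample noise contributes $\bm{\Sigma}(\bt_0)/n$, and the total is $(1 + \frac{1}{H})\bm{\Sigma}(\bt_0)/n$ --- precisely the normalisation in the statement. A Cram\'er--Wold / CLT argument over the $H + 1$ asymptotically independent Gaussian contributions then delivers the $\mathcal{N}(0, 1)$ limit.

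The step I expect to be the main obstacle is the rigorous handling of the simulation noise evaluated at the \emph{random, data-dependent} point $\hat{\bt}$ rather than at $\bt_0$. One must show that replacing $\hat{\bt}$ by $\bt_0$ inside $\mathbf{v}^\ast_h(\cdot, n)$ leaves the limiting distribution unchanged --- which calls for a stochastic-equicontinuity (or uniform-in-$\bt$) control of the simulated process combined with $\hat{\bt} - \bt_0 = o_{\rm p}(1)$ from Lemma~\ref{lemma:consist} --- while \emph{simultaneously} retaining the independence between the original-data noise and the simulation noise that produces the variance-inflation factor $(1 + \frac{1}{H})$. Reconciling the data-dependence of the evaluation point with this independence, and verifying that the accumulated $p$-dependent remainders stay negligible in the high-dimensional regime, is the crux; in low dimensions this reduces to the standard indirect-inference asymptotics of \citet{gourieroux1996simulation}.
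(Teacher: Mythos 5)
Your proposal is correct and follows essentially the same route as the paper: both start from $\tilde{\bt} = \bm{\pi}^\ast(\hat{\bt}, n)$, decompose $\hat{\bt} - \bt_0$ into the bias difference plus the data noise $\mathbf{v}(\bt_0,n)$ minus the averaged simulation noise, show the bias difference is negligible after $\sqrt{n}$-scaling, and combine $H+1$ asymptotically independent standard normals via Slutsky to obtain the $(1+\nicefrac{1}{H})$ variance inflation. Your mean-value-theorem/Neumann-series treatment of the bias term is a slightly more explicit version of the paper's continuity-plus-consistency argument, and the obstacle you flag --- justifying the limiting law of $\mathbf{v}^\ast_h$ evaluated at the random point $\hat{\bt}$ --- is precisely the step the paper also passes over tersely (it simply asserts the analogue of Assumption~\ref{assum:D} at $\hat{\bt}$), so no substantive gap separates the two arguments.
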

\begin{proof}
By definition, we have,
\begin{equation*}
    \begin{aligned}
        \hat{\bt} - \bt_0 = \mathbf{b}(\bt_0, n) - \mathbf{b}(\hat{\bt}, n) + \mathbf{v}(\bt_0, n) - \frac{1}{H}\sum^{H}_{h = 1}\mathbf{v}^{*}_h(\hat{\bt}, n).
    \end{aligned}
\end{equation*}
Setting $\mathbf{V}_{\bt_0} \vcentcolon=  \mathbf{s}^{T}
  \bm{\Sigma}(\bt_0)^{-\nicefrac{1}{2}}$, using Assumption \ref{assum:D} and the continuous mapping theorem, we have 
\begin{equation}\label{eq:lem:asym:norm:2}
    \sqrt{n}\mathbf{V}_{\bt_0} \left\{ \mathbf{b}(\bt_0, n) - \mathbf{b}(\hat{\bt}, n) \right\} = o_{\rm{p}}(1),
\end{equation}
since $\mathbf{b}(\bt,n)$ is continuous in $\bt$ by Assumption \ref{assum:B} and $\hat{\bt}$ is a consistent estimator of $\bt_0$ by Lemma \ref{lemma:consist}.
By Assumption \ref{assum:D} again, we have 
\begin{equation}\label{eq:lem:asym:norm:3}
  \sqrt{n}\mathbf{V}_{\bt_0} \mathbf{v}(\bt_0, n) \stackrel{d}{=} Z_0 + o_{\rm{p}}(1),
\end{equation}
where $Z_0$ is an independent $\mathcal{N}(0,1)$ random variable. Now, for any $h=1,\cdots, H$, we have
\begin{equation}\label{eq:lem:asym:norm:4}
  \sqrt{n}\mathbf{V}_{\bt_0} \mathbf{v}^{*}_h(\hat{\bt}, n)  \stackrel{d}{=} Z_h + o_{\rm{p}}(1),
\end{equation}
where $Z_h$ is an independent $\mathcal{N}(0,1)$ random variable. 
Since $\mathbf{V}_{\bt}$ is continuous in $\bt$ by Assumption~\ref{assum:D}, we have $\mathbf{V}_{\hat{\bt}} \xrightarrow{\;p\;}~\mathbf{V}_{\bt_0}$ by the continuous mapping theorem.
Therefore, combining~\eqref{eq:lem:asym:norm:2},~\eqref{eq:lem:asym:norm:3} and~\eqref{eq:lem:asym:norm:4}, we have by Slutsky's lemma
\begin{equation*}
\begin{aligned}
    &\left(1 + \frac{1}{H}\right)^{-\nicefrac{1}{2}}\sqrt{n}\mathbf{V}_{\bt_0} \left\{ \mathbf{v}(\bt_0, n) - \frac{1}{H}\sum^{H}_{h = 1}\mathbf{v}^{*}_h(\hat{\bt}, n) \right\} \\ 
    &\qquad\qquad \stackrel{d}{=}
     \left(1 + \frac{1}{H}\right)^{-\nicefrac{1}{2}}\sqrt{n} \left\{Z_0 - \frac{1}{H}\sum^{H}_{h = 1} Z_h + o_{\rm{p}}(1)\right\} \xrightarrow{\;d\;} \mathcal{N}\left(\mathbf{0}, 1\right),
    \end{aligned}
\end{equation*}
which ends the proof.
\end{proof}
\newpage
\section{Proof of Proposition \ref{prop:ib}}\label{app:ib}
\begin{proof}[Proof of Proposition~\ref{prop:ib}]
	We consider the function $T(\bt,n)$ defined in \eqref{eq:fixed:point} and recall that  
		\begin{equation*}
			T(\bt,n) = \bt + \tilde{\bt} - \bm{\pi}^{*}(\bt,n),
		\end{equation*}
		where $\bm{\pi}^{*}(\bt,n) = \frac{1}{H} \sum_{h = 1}^H \tilde{\bt}^{*}_h$. Formally, the function $T(\cdot,n)$ is defined on $\bm\Theta$ with target space $\real^p$ and is a deterministic function as the seeds used to compute $\tilde{\bt}^{*}_h$ (using the simulated samples $\mathbf{X}_h^*(\bt)$) are fixed. First, we show that $T(\cdot,n)$ is a contraction map for sufficiently large $n$. It enables us to apply Kirszbraun theorem (see \citealp{federer2014geometric}) and Banach fixed-point theorem to show that $T(\cdot,n)$ admits a unique fixed-point. 
		
		Let us consider $\bt_1,\bt_2\in\bm\Theta$, and compute 
		\begin{equation*}
			\begin{aligned}
				\left\lVert T(\bt_1,n) - T(\bt_2,n)\right\rVert_2^2 &= \left\lVert\mathbf{b}(\bt_2,n) - \mathbf{b}(\bt_1,n) + \frac{1}{H}\sum^{H}_{h=1} \mathbf{v}^{*}_h \left(\bt_2, n\right) - \mathbf{v}^{*}_h \left(\bt_1, n\right)  \right\rVert_2^2\\
				 &\leq \left\lVert\mathbf{b}(\bt_2,n) - \mathbf{b}(\bt_1,n) \right\rVert_2^2 + \frac{1}{H}\sum^{H}_{h=1}\left\lVert\mathbf{v}^{*}_h \left(\bt_2, n\right) - \mathbf{v}^{*}_h \left(\bt_1, n\right) \right\rVert_2^2 
				 \\
				 &= \left\lVert\mathbf{b}(\bt_2,n) - \mathbf{b}(\bt_1,n) \right\rVert_2^2 + \mathcal{O}_{\rm p}\left(pH^{-1}n^{-2\alpha}\right), 
			\end{aligned}
		\end{equation*}
		where the last equality is implied by Assumption \ref{assum:C}. Considering the first term of the last equality, by Assumption \ref{assum:B} and using the multivariate mean value theorem (with the same notation used in the proof of Theorem \ref{thm:bias}), we have
		\begin{equation*}
		   \left\lVert\mathbf{b}(\bt_2,n) - \mathbf{b}(\bt_1,n) \right\rVert_2^2 = \left\lVert\mathbf{B}(\bt_2 - \bt_1) \right\rVert_2^2 \leq \left\lVert\mathbf{B}\right\rVert_F^2 \left\lVert \bt_2 - \bt_1 \right\rVert_2^2,
		\end{equation*}
	    where $\lVert\cdot\rVert_F$ is the Frobenius norm. Using the same argument as in the proof of Theorem \ref{thm:bias} to compute the order of $\mathbf{B}$, we have 
	    \begin{equation*}
			\lVert\mathbf{B}\rVert_F=\sqrt{\sum_{j = 1}^p \sum_{l = 1}^p B_{j,l}^2}\leq p \max_{j,l = 1,\, \ldots, \, p} \lvert B_{j,l}\rvert = p\; \mathcal{O}\left(p^{-1}n^{-\beta}\right) =  \mathcal{O}\left(n^{-\beta}\right).
	    \end{equation*}
	    Therefore, we obtain 
	    \begin{equation*}
	        	\left\lVert T(\bt_1,n)-T(\bt_2,n)\right\rVert_2^2\leq \mathcal{O}\left(n^{-2\beta}\right) \left\lVert \bt_2 - \bt_1 \right\rVert_2^2 + \mathcal{O}_{\rm p}\left(pH^{-1}n^{-2\alpha}\right).
	    \end{equation*}
	    Since $\alpha, \beta > 0$ and by Assumption \ref{assum:C}, for sufficiently large $n$ we have that there exists $\varepsilon\in(0,1)$ such that for all $\bt_1,\,\bt_2\in\bm\Theta$ 
    	\begin{equation}
    	\label{T:contract}
		\left\Vert T(\bt_1,n) - T(\bt_2,n) \right\Vert_2 < \varepsilon \left\Vert\bt_2 - \bt_1\right\Vert_2.
	    \end{equation}
    	 Using Kirszbraun theorem, we can extend $T(\cdot, n)$ to a contraction map from $\real^p$ to itself. Therefore, applying Banach fixed-point theorem, there exists a unique fixed-point $\hat{\bt}\in\real^p$. However, by Assumption \ref{assum:B} and \ref{assum:C}, we have $\hat{\bt}\in\bm\Theta$ for large enough $n$.
    	 
		It remains to demonstrate that for all integer $k\geq0$, we have
		\begin{equation*}
         	\left\lVert\hat{\bt}^{(k)} - \hat{\bt}\right\rVert_2  =o_{\rm p}\left\{\exp(-k)\right\},
         \end{equation*}
		where the sequence $\left\{ \hat{\bt}^{(k)} \right\}_{k \in \N}$ is defined as in \eqref{eq:ib:seq}. This demonstration is straightforward by induction using \eqref{T:contract} and the fact that $\hat{\bt}^{(0)} = \tilde{\bt}$ and $\hat{\bt}$  (by Proposition \ref{thm:stat_prop}) are consistent estimator of $\bt_0$, that is 
		\begin{equation*}
		    	\left\lVert\hat{\bt}^{(0)} - \hat{\bt}\right\rVert_2 \leq \left\lVert\tilde{\bt} - \bt_0 \right\rVert_2 + \left\lVert\hat{\bt} - \bt_0 \right\rVert_2= o_{\rm p}\left(1\right).
		\end{equation*}
\end{proof}

\newpage
\bibliographystyle{agsm}
\bibliography{biblio}

\end{document}